\numberwithin{equation}{section}
\theoremstyle{definition}
\numberwithin{equation}{section}
\newcommand{\ncom}{\newcommand}
\ncom{\beq}{\begin{equation}}
\ncom{\eeq}{\end{equation}}
\ncom{\bea}{\begin{eqnarray*}}
\ncom{\eea}{\end{eqnarray*}}
\ncom{\beqa}{\begin{eqnarray}}
\ncom{\eeqa}{\end{eqnarray}}
\ncom{\nno}{\nonumber}
\ncom{\non}{\nonumber}
\ncom{\ds}{\displaystyle}
\ncom{\half}{\frac{1}{2}}
\ncom{\mbx}{\makebox{.25cm}}
\ncom{\hs}{\mbox{\hspace{.25cm}}}
\ncom{\rar}{\rightarrow}
\ncom{\Rar}{\Rightarrow}
\ncom{\noin}{\noindent}
\ncom{\bc}{\begin{center}}
\ncom{\ec}{\end{center}}
\ncom{\sz}{\scriptsize}
\ncom{\rf}{\ref}
\ncom{\s}{\sqrt{2}}
\ncom{\sgm}{\sigma}
\ncom{\Sgm}{\Sigma}
\ncom{\psgm}{\sigma^{\prime}}
\ncom{\dt}{\delta}
\ncom{\Dt}{\Delta}
\ncom{\lmd}{\lambda}
\ncom{\Lmd}{\Lambda}
\ncom{\Th}{\Theta}
\ncom{\e}{\eta}
\ncom{\eps}{\epsilon}
\ncom{\pcc}{\stackrel{P}{>}}
\ncom{\lp}{\stackrel{L_{p}}{>}}
\ncom{\dist}{{\rm\,dist}}
\ncom{\sspan}{{\rm\,span}}
\ncom{\re}{{\rm Re\,}}
\ncom{\im}{{\rm Im\,}}
\ncom{\sgn}{{\rm sgn\,}}
\ncom{\ba}{\begin{array}}
\ncom{\ea}{\end{array}}
\ncom{\hone}{\mbox{\hspace{1em}}}
\ncom{\htwo}{\mbox{\hspace{2em}}}
\ncom{\hthree}{\mbox{\hspace{3em}}}
\ncom{\hfour}{\mbox{\hspace{4em}}}
\ncom{\vone}{\vskip 2ex}
\ncom{\vtwo}{\vskip 4ex}
\ncom{\vonee}{\vskip 1.5ex}
\ncom{\vthree}{\vskip 6ex}
\ncom{\vfour}{\vspace*{8ex}}
\ncom{\norm}{\|\;\;\|}
\ncom{\integ}[4]{\int_{#1}^{#2}\,{#3}\,d{#4}}
\ncom{\vspan}[1]{{{\rm\,span}\{ #1 \}}}
\ncom{\dm}[1]{ {\displaystyle{#1} } }
\ncom{\ri}[1]{{#1} \index{#1}}
\newtheorem{theorem}{\bf Theorem}[section]
\newtheorem{remark}{\bf Remark}[section]
\newtheorem{lemma}{Lemma}[section]
\newtheorem{corollary}{Corollary}[section]
\newtheorem{example}{Example}[section]
\newtheoremstyle
    {remarkstyle}
    {}
    {11pt}
    {}
    {}
    {\bfseries}
    {:}
    {     }
    {\thmname{#1} \thmnumber{#2} }
\theoremstyle{remarkstyle}
\def\eps{\varepsilon}
\begin{document}
\title{Saigo Space-Time Fractional Poisson Process via Adomian Decomposition Method}
\author[Kuldeep Kumar Kataria]{K. K. Kataria}
\address{Kuldeep Kumar Kataria, Department of Mathematics,
 Indian Institute of Technology Bombay, Powai, Mumbai 400076, INDIA.}
 \email{kulkat@math.iitb.ac.in}
\author{P. Vellaisamy}
\address{P. Vellaisamy, Department of Mathematics,
 Indian Institute of Technology Bombay, Powai, Mumbai 400076, INDIA.}
 \email{pv@math.iitb.ac.in}
\thanks{The research of K. K. Kataria was supported by a UGC fellowship no. F.2-2/98(SA-I), Govt. of India.}
\subjclass[2010]{Primary : 60G22; Secondary: 26A33}
\keywords{fractional Poisson processes; fractional derivatives; Adomian decomposition method.}
\date{February 06, 2017}
\begin{abstract}
We obtain the state probabilities of various fractional versions of the classical homogeneous Poisson process using an alternate and simpler method known as the Adomian decomposition method (ADM). Generally these state probabilities are obtained by evaluating probability generating function using Laplace transform. A generalization of the space and time fractional Poisson process involving the Caputo type Saigo differential operator is introduced and its state probabilities are obtained using ADM.
\end{abstract}

\maketitle
\section{Introduction}
The distribution of the classical homogeneous Poisson process $\{N(t,\lambda)\}_{t\geq0}$ with intensity parameter $\lambda>0$ is given by
\begin{equation}
p(n,t)=\mathrm{Pr}\{N(t,\lambda)=n\}=\frac{e^{-\lambda t}(\lambda t)^n}{n!},\ \ n=0,1,2,\ldots.
\end{equation}
The state probabilities $p(n,t)$, $n\geq0$, of the homogeneous Poisson process solve the following difference-differential equations:
\begin{equation}\label{rynew}
\frac{\mathrm{d}}{\mathrm{dt}}p(n,t)=-\lambda(1-B)p(n,t),\ \ n\geq 0,
\end{equation}
with $p(-1,t)=0$, $t\geq 0$ and subject to the initial conditions $p(0,0)=1$ and $p(n,0)=0$, $n\geq 1$. In the above Kolmogorov equations, $B$ is the backward shift operator acting on the state space, \textit{i.e.} $B(p(n,t))=p(n-1,t)$.

Recently, many authors introduced various fractional generalizations of the homogeneous Poisson process. The time fractional version is obtained by replacing the time derivative in (\ref{rynew}) with the Riemann-Liouville fractional derivative (see {\small{\sc Laskin}} (2003)) or the Caputo fractional derivative (see {\small{\sc Beghin and Orsingher}} (2009)). The time fractional Poisson process (TFPP) $\{N^\alpha(t,\lambda)\}$, $0<\alpha\leq 1$, is defined as the stochastic process whose probability mass function (pmf) $p^\alpha(n,t)=\mathrm{Pr}\{N^\alpha(t,\lambda)=n\}$, satisfies
\begin{equation}\label{r2newq}
\partial_t^\alpha p^\alpha(n,t)=-\lambda(1-B)p^\alpha(n,t),\ \ n\geq 0,
\end{equation}
with $p^\alpha(-1,t)=0$, $t\geq 0$ and the initial conditions $p^\alpha(0,0)=1$ and $p^\alpha(n,0)=0$, $n\geq 1$. Here $\partial_t^\alpha$ denotes the fractional derivative in Caputo sense defined as
\begin{equation}\label{capt}
\partial_t^{\alpha}f(t):=\left\{
	\begin{array}{ll}
	    \frac{1}{\Gamma{(1-\alpha)}}\int^t_{0} (t-s)^{-\alpha}f'(s)\,\mathrm{d}s,\ \ 0<\alpha<1,\\\\
		f'(t),\ \ \alpha=1.
	\end{array}
\right.
\end{equation}
The pmf of the TFPP is given by
\begin{equation}\label{z2}
p^\alpha(n,t)=\frac{(\lambda t^\alpha)^n}{n!}\sum_{k=0}^{\infty‎}\frac{(k+n)!}{k!}\frac{(-\lambda t^\alpha)^k}{\Gamma\left((k+n)\alpha+1\right)},\ \ n\geq0.
\end{equation}
Moreover, we have (see {\small{\sc Meerschaert}} \textit{et al.} (2011)) for $0<\alpha<1$,
\begin{equation}\label{1.1df}
N^\alpha(t,\lambda)\overset{d}{=}N(E_\alpha(t),\lambda),
\end{equation}
where $\overset{d}{=}$ means equal in distribution and $\{E_{\alpha}(t)\}$ is the inverse $\alpha$-stable subordinator independent of $\{N(t,\lambda)\}$.

{\small{\sc Orsingher and Polito}} (2012) introduced a fractional difference operator in the equations governing the state probabilities of the homogeneous Poisson process to obtain a space fractional generalization. The space fractional Poisson process (SFPP) $\{N_\nu(t,\lambda)\}$, $0<\nu\leq 1$, is defined as the stochastic process whose pmf $p_\nu(n,t)=\mathrm{Pr}\{N_\nu(t,\lambda)=n\}$, satisfies
\begin{equation}\label{nnew}
\frac{\mathrm{d}}{\mathrm{d}t} p_\nu(n,t)=-\lambda^\nu(1-B)^\nu p_\nu(n,t),\ \ n\geq 0,
\end{equation}
with initial conditions $p_\nu(0,0)=1$ and $p_\nu(n,0)=0$, $n\geq 1$. Also, $p_\nu(-n,t)=0$, $t\geq 0$, $n\geq 1$. Here, $(1-B)^\nu=\sum_{r=0}^{\infty}\frac{(\nu)_r}{r!}(-1)^rB^r$ is the fractional difference operator and hence, (\ref{nnew}) can be equivalently written as
\begin{equation}\label{z3}
\frac{\mathrm{d}}{\mathrm{d}t} p_\nu(n,t)=-\lambda^\nu\sum_{r=0}^{n}\frac{(\nu)_r}{r!}(-1)^r p_\nu(n-r,t),\ \ n\geq 0,
\end{equation}
where $(\nu)_r=\nu(\nu-1)\ldots(\nu-k+1)$ denotes the falling factorial. The pmf of the SFPP is given by
\begin{equation}\label{z4}
p_\nu(n,t)=\frac{(-1)^n}{n!}\sum_{k=0}^{\infty‎}\frac{(-\lambda^\nu t)^k}{k!}\frac{\Gamma(k\nu+1)}{\Gamma(k\nu+1-n)},\ \ n\geq0.
\end{equation}
A different characterization of the SFPP is obtained in {\small{\sc Orsingher and Polito}} (2012), where the homogeneous Poisson process $\{N(t,\lambda)\}$ is subordinated by an independent $\nu$-stable subordinator $\{D_{\nu}(t)\}$, $0<\nu<1$, {\it i.e.}
\begin{equation}\label{1.1fd}
N_\nu(t,\lambda)\overset{d}{=}N(D_{\nu}(t),\lambda),\ t\geq0.
\end{equation}

A further generalization, namely, the space and time fractional Poisson process (STFPP) (see {\small{\sc Orsingher and Polito}} (2012)) $\{N^\alpha_\nu(t,\lambda)\}$, $0<\alpha,\nu\leq 1$, is the stochastic process with pmf $p^\alpha_{\nu}(n,t)=\mathrm{Pr}\{N^\alpha_\nu(t,\lambda)=n\}$, satisfying
\begin{equation}\label{nnewo}
\partial^\alpha_t p^\alpha_{\nu}(n,t)=-\lambda^\nu(1-B)^\nu p^\alpha_{\nu}(n,t),\ \ n\geq 0,
\end{equation}
with initial conditions $p^\alpha_{\nu}(0,0)=1$ and $p^\alpha_{\nu}(n,0)=0$, $n\geq 1$. Also, $p^\alpha_{\nu}(-n,t)=0$, $t\geq 0$, $n\geq 1$. Equivalently, (\ref{nnewo}) can be written as
\begin{equation*}
\partial^\alpha_t p^\alpha_{\nu}(n,t)=-\lambda^\nu\sum_{r=0}^{n}\frac{(\nu)_r}{r!}(-1)^r p^\alpha_{\nu}(n-r,t),\ \ n\geq 0.
\end{equation*}
The pmf of the STFPP is given by
\begin{equation*}
p^\alpha_{\nu}(n,t)=\frac{(-1)^n}{n!}\sum_{k=0}^{\infty‎}\frac{(-\lambda^\nu t^\alpha)^k}{\Gamma(k\alpha+1)}\frac{\Gamma(k\nu+1)}{\Gamma(k\nu+1-n)},\ \ n\geq0.
\end{equation*}
For $\alpha=\nu=1$ in (\ref{r2newq}), (\ref{nnew}) and (\ref{nnewo}) the TFPP, SFPP and STFPP reduces to classical homogeneous Poisson process. {\small{\sc Polito and Scalas}} (2016) introduced and studied a further generalization of the STFPP which involves the Prabhakar derivative.

The state probabilities of such fractional Poisson processes are generally obtained by evaluating the corresponding probability generating function using Laplace transform. Also, in view of (\ref{1.1df}) and (\ref{1.1fd}) the state probabilities of the SFPP and TFPP can be obtained from the density of the stable and inverse stable subordinator, respectively.

In this note, we obtain these state probabilities by using an alternative method known as the Adomian decomposition method (ADM), which is more direct and much simpler. The method is effective in cases where Laplace transform of a certain fractional derivative is either not known or have complicated form. We also improve the result of {\small{\sc Rao}} {\it et al.} (2010) by introducing the correct version of the Caputo type Saigo fractional derivative. A generalization of the STFPP, namely, the Saigo space and time fractional Poisson process (SSTFPP), which involves the Saigo fractional derivatives in Caputo sense is introduced. As an illustration we obtain the state probabilities of SSTFPP using ADM which are otherwise difficult to obtain using prevalent methods.

\section{Adomian decomposition method}
In ADM (see {\small{\sc Adomian}} (1986), (1994)), solution of the functional equation
\begin{equation}\label{1.1}
u=f+N(u),
\end{equation}
where $N$ is a nonlinear operator and $f$ is a known function, is expressed in the form of an infinite series
\begin{equation}\label{1.2}
u= ‎‎\sum_{n=0}^{\infty‎}‎u_n.
\end{equation}
The nonlinear term $N(u)$ decomposes as 
\begin{equation}\label{1.3}
N(u)=‎‎\sum_{n=0}^{\infty‎}A_n(u_0,u_1,\ldots,u_n),
\end{equation}
where $A_n$ denotes the $n$-th Adomian polynomial in $u_0,u_1,\ldots,u_n$. Also, the series (\ref{1.2}) and (\ref{1.3}) are assumed to be absolutely convergent. So, (\ref{1.1}) can be rewritten as
\begin{equation}\label{1.4}
‎‎\sum_{n=0}^{\infty‎}‎u_n=f+‎‎\sum_{n=0}^{\infty‎}A_n(u_0,u_1,\ldots,u_n).
\end{equation}
Thus $u_n$'s are obtained by the following recursive relation
\begin{equation*}\label{1.5}
u_0=f\ \ \ \ \mathrm{and}\ \ \ \ u_n=A_{n-1}(u_0,u_1,\ldots,u_{n-1}).
\end{equation*}
The crucial step involved in ADM is the calculation of Adomian polynomials. {\small{\sc Adomian} (1986) gave a method for determining these polynomials, by parametrizing $u$ as
\begin{equation*}\label{1.4a}
u_\lambda=‎‎\sum_{n=0}^{\infty‎}u_n\lambda^n 
\end{equation*}
and assuming $N(u_\lambda)$ to be analytic in $\lambda$, which decomposes as
\begin{equation*}\label{1.5a}
N(u_\lambda)=‎‎\sum_{n=0}^{\infty‎}A_n(u_0,u_1,\ldots,u_n)\lambda^n.
\end{equation*}
Hence, Adomian polynomials are given by
\begin{equation}\label{1.6}
A_n(u_0,u_1,\ldots,u_n)=\left.\frac{1}{n!}\frac{\partial^n N(u_\lambda)}{\partial \lambda^n} \right|_{\lambda=0},\ \forall\ n\geq 0.
\end{equation}
{\small{\sc Rach}} (1984) suggested the following formula for these polynomials: $A_0(u_0)=‎‎N(u_0)$,
\begin{equation}\label{1.7}
A_n(u_0,u_1,\ldots,u_n)=\sum_{k=1}^{n‎}C(k,n)N^{(k)}(u_0),\ \forall\ n\in\mathbb{N},
\end{equation}
where 
\begin{equation*}\label{1.8}
C(k,n)=\underset{\sum_{j=1}^nk_j=k\ ,\ k_j\in\mathbb{N}_0}{\sum_{\sum_{j=1}^{n}jk_j=n}}\prod_{j=1}^{n}\frac{u_j^{k_j}}{k_j!},
\end{equation*}
and $N^{(k)}(.)$ denotes the $k$-th derivative of the nonlinear term. One can easily show the equivalence of (\ref{1.6}) and (\ref{1.7}) using the Fa\`{a} di Bruno's formula. Recently, {\small{\sc Kataria and Vellaisamy}} (2016) obtained simple parametrization methods for generating these Adomian polynomials both explicitly and recursively. For more recent work on Adomian polynomials see {\small{\sc Duan}} (2010), (2011). 

The only crucial and difficult step involved in ADM is the computation of these polynomials. But, for the linear case, $N(u)=u$, $A_n$ simply reduces to $u_n$. Note that the functional equations corresponding to various fractional generalizations of the homogeneous Poisson process does not involve nonlinear term. Hence, ADM conveniently and rapidly gives the state probabilities as the series solutions of the corresponding difference-differential equations.

\section{Application of ADM to fractional Poisson processes}
The state probabilities of certain fractional versions of homogeneous Poisson process are obtained by several authors by evaluating the probability generating functions using Laplace transform, see {\small{\sc Beghin and Orsingher}} (2009), {\small{\sc Polito and Scalas}} (2016) and Remark 3.3 of {\small{\sc Meerschaert}} \textit{et al.} (2011) and references therein. In this section, we apply ADM to obtain the distribution of STFPP. Note that ADM can also be effectively used to independently obtain the state probabilities of TFPP and SFPP (see Supplementary file).

First we define the Riemann-Liouville (RL) fractional integral $I^\alpha_t$ of order $\alpha$,
\begin{equation}
I^\alpha_tf(t):=\frac{1}{\Gamma{(\alpha)}}\int^t_{0} (t-s)^{\alpha-1}f(s)\,\mathrm{d}s,\ \ \alpha>0.
\end{equation}
\begin{remark}
Note that the RL integral is a linear operator. Therefore, the Adomian polynomials $A_k$'s for the case $N(u(t))=cI^\alpha_tu(t)$ are simply $A_k(u_0(t),u_1(t),\ldots,u_k(t))=cI^\alpha_tu_k(t)$, where $c$ is a scalar.
\end{remark}
The following result will be used (see Eq. 2.1.16, Kilbas {\it et. al.} (2006)).
\begin{lemma}\label{lem1}
Let $\alpha,\rho> 0$. Then
\begin{equation*}
I^\alpha_tt^{\rho-1}=\frac{\Gamma(\rho)}{\Gamma(\rho+\alpha)}t^{\rho+\alpha-1}.
\end{equation*}
\end{lemma}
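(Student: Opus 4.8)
The plan is to verify the formula by a direct computation using the definition of the Riemann--Liouville fractional integral together with the Beta integral. First I would write out
\[
I^\alpha_t t^{\rho-1}=\frac{1}{\Gamma(\alpha)}\int_0^t (t-s)^{\alpha-1}s^{\rho-1}\,\mathrm{d}s,
\]
and then perform the substitution $s=tu$, $\mathrm{d}s=t\,\mathrm{d}u$, which maps the interval $[0,t]$ to $[0,1]$ and pulls out a power of $t$. This turns the integral into $t^{\rho+\alpha-1}$ times $\int_0^1 (1-u)^{\alpha-1}u^{\rho-1}\,\mathrm{d}u$, which is precisely the Beta function $B(\rho,\alpha)$.

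The second step is to invoke the classical identity $B(\rho,\alpha)=\Gamma(\rho)\Gamma(\alpha)/\Gamma(\rho+\alpha)$. Combining this with the factor $1/\Gamma(\alpha)$ in front of the integral cancels the $\Gamma(\alpha)$, leaving exactly $\frac{\Gamma(\rho)}{\Gamma(\rho+\alpha)}t^{\rho+\alpha-1}$, as claimed.

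The only point requiring a word of care is the convergence of the integral defining $B(\rho,\alpha)$ at the two endpoints: near $u=0$ the integrand behaves like $u^{\rho-1}$, which is integrable since $\rho>0$, and near $u=1$ it behaves like $(1-u)^{\alpha-1}$, which is integrable since $\alpha>0$. Thus the hypotheses $\alpha,\rho>0$ are exactly what is needed, and there is no real obstacle here; the result is standard (it appears as Eq.~2.1.16 in Kilbas \textit{et al.} (2006)) and the argument above is essentially a one-line change of variables followed by the Beta--Gamma relation.
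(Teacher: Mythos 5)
Your proof is correct: the substitution $s=tu$ reduces the integral to $t^{\rho+\alpha-1}B(\rho,\alpha)/\Gamma(\alpha)$, and the Beta--Gamma relation gives the stated formula, with the hypotheses $\alpha,\rho>0$ ensuring convergence at both endpoints. The paper itself offers no proof, merely citing Eq.~2.1.16 of Kilbas \emph{et al.}\ (2006); your argument is exactly the standard derivation behind that reference, so there is nothing to compare beyond noting that you have supplied the details the paper omits.
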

It is known that (see Eq. 2.4.44, Kilbas {\it et. al.} (2006))
\begin{equation}\label{ythw}
I^\alpha_t\partial^\alpha_tf(t)=f(t)-f(0),\ \ 0<\alpha\leq 1,
\end{equation}
where $\partial^\alpha_t$ denotes the Caputo derivative defined in (\ref{capt}).
\subsection{Space and time fractional Poisson process}
The next result is stated without proof in {\small{\sc Orsingher and Polito}} (2012). We give a detailed proof using ADM. 
\begin{theorem}\label{t2kk}
Consider the following difference-differential equations governing the state probabilities of the STFPP:
\begin{equation}\label{hj456}
\partial^\alpha_t p^\alpha_{\nu}(n,t)=-\lambda^\nu\sum_{r=0}^{n}(-1)^r\frac{(\nu)_r}{r!} p^\alpha_{\nu}(n-r,t),\ \ 0<\alpha,\nu\leq 1,\ n\geq 0,
\end{equation}
with $p^\alpha_{\nu}(0,0)=1$ and $p^\alpha_{\nu}(n,0)=0$, $n\geq 1$. The solution of (\ref{hj456}) is given by
\begin{equation}\label{2.4kkt}
p^\alpha_{\nu}(n,t)=\frac{(-1)^n}{n!}\sum_{k=0}^{\infty‎}\frac{(-\lambda^\nu t^\alpha)^k}{\Gamma(k\alpha+1)}\frac{\Gamma(k\nu+1)}{\Gamma(k\nu+1-n)},\ \ n\geq0.
\end{equation}
\end{theorem}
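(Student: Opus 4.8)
The plan is to recast (\ref{hj456}) as a fixed-point equation to which ADM applies directly, and then to recognize the resulting series. First I would apply the RL integral $I^\alpha_t$ to both sides of (\ref{hj456}); since $0<\alpha\le 1$, formula (\ref{ythw}) together with the initial conditions $p^\alpha_\nu(n,0)=\delta_{n,0}$ yields the equivalent integral equation
\[
p^\alpha_\nu(n,t)=\delta_{n,0}-\lambda^\nu\sum_{r=0}^n(-1)^r\frac{(\nu)_r}{r!}\,I^\alpha_t\,p^\alpha_\nu(n-r,t),\qquad n\ge 0 .
\]
This is of the form (\ref{1.1}) with $f=\delta_{n,0}$ and a \emph{linear} operator on the right, so by the Remark on the linearity of $I^\alpha_t$ the Adomian polynomials reduce to the series components themselves, and ADM prescribes $p^\alpha_\nu(n,t)=\sum_{k=0}^\infty v_k(n,t)$ with $v_0(n,t)=\delta_{n,0}$ and
\[
v_k(n,t)=-\lambda^\nu\sum_{r=0}^n(-1)^r\frac{(\nu)_r}{r!}\,I^\alpha_t\,v_{k-1}(n-r,t),\qquad k\ge 1 .
\]
For each fixed $k$ this is an explicit triangular recursion in $n$, so all $v_k(n,t)$ are well defined.

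Next I would compute $v_1$ with Lemma~\ref{lem1} (taking $\rho=1$) and the identity $(\nu)_r=\Gamma(\nu+1)/\Gamma(\nu+1-r)$, obtaining $v_1(n,t)=\frac{(-1)^n}{n!}\cdot\frac{-\lambda^\nu t^\alpha}{\Gamma(\alpha+1)}\cdot\frac{\Gamma(\nu+1)}{\Gamma(\nu+1-n)}$. This suggests the closed form
\[
v_k(n,t)=\frac{(-1)^n}{n!}\,\frac{(-\lambda^\nu t^\alpha)^k}{\Gamma(k\alpha+1)}\,\frac{\Gamma(k\nu+1)}{\Gamma(k\nu+1-n)},\qquad k\ge 0 ,
\]
which I would prove by induction on $k$. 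In the inductive step one substitutes the hypothesis for $v_k$ into the recursion for $v_{k+1}$, pulls the constants out of the sum and the integral, and uses Lemma~\ref{lem1} with $\rho=k\alpha+1$ to evaluate $I^\alpha_t t^{k\alpha}=\frac{\Gamma(k\alpha+1)}{\Gamma((k+1)\alpha+1)}\,t^{(k+1)\alpha}$; after cancellation the only thing left to verify is the combinatorial identity
\[
\sum_{r=0}^n\binom{n}{r}(\nu)_r\,(k\nu)_{n-r}=\bigl((k+1)\nu\bigr)_n ,
\]
where $(x)_m=x(x-1)\cdots(x-m+1)$ denotes the falling factorial.

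The main obstacle is exactly this identity: it is the Chu--Vandermonde convolution $\binom{x+y}{n}=\sum_r\binom{x}{r}\binom{y}{n-r}$ rewritten via $\binom{x}{m}=(x)_m/m!$ and $(x)_m=\Gamma(x+1)/\Gamma(x+1-m)$, the only delicate point being that several Gamma arguments may be non-positive integers (for instance when $\nu=1$), which is handled uniformly by the convention $1/\Gamma(\text{non-positive integer})=0$; this same convention forces $v_k(n,t)=0$ whenever $k\nu+1-n$ is a non-positive integer, consistently with the degeneration of $N^\alpha_\nu$ to the ordinary Poisson process at $\nu=1$. Summing over $k$ then produces (\ref{2.4kkt}). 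Finally I would observe that $v_k(n,0)=0$ for every $k\ge 1$ (each $v_k$ carries the factor $t^{k\alpha}$), so $p^\alpha_\nu(0,0)=1$ and $p^\alpha_\nu(n,0)=0$ for $n\ge 1$ are satisfied, and that the series in (\ref{2.4kkt}) converges absolutely for each $t$ --- it is a ratio-of-Gamma series of Mittag--Leffler type in $t^\alpha$ --- which legitimizes the term-by-term operations underlying the ADM scheme and shows that the constructed $\sum_k v_k$ indeed solves the integral equation, hence (\ref{hj456}).
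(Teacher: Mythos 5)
Your proposal is correct and follows essentially the same route as the paper: apply $I^\alpha_t$ to convert (\ref{hj456}) into a fixed-point integral equation, exploit the linearity of the operator so that the Adomian polynomials reduce to the series components, and close the recursion using Lemma~\ref{lem1} together with the Vandermonde convolution for falling factorials. The only difference is organizational --- you run a single induction on the ADM index $k$ uniformly in $n$, while the paper performs an outer induction on the state $n$ with an inner induction on $k$ for each fixed state --- which is a tidier packaging of the identical argument.
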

\begin{proof}
Applying RL integral $I^\alpha_t$ on both sides of (\ref{hj456}) and using (\ref{ythw}), we get
\begin{equation}\label{kol}
p^\alpha_{\nu}(n,t)=p^\alpha_{\nu}(n,0)-\lambda^\nu I^\alpha_t \sum_{r=0}^n (-1)^r\frac{(\nu)_r}{r!}p^\alpha_{\nu}(n-r,t),\ \ n\geq 0.
\end{equation}
Note that $p^\alpha_{\nu}(-1,t)=0$ for $t\geq0$. For $n=0$, the above functional equation is of the form (\ref{1.1}), where $N(p^\alpha_{\nu}(0,t))=-\lambda^\nu I_t^\alpha p^\alpha_{\nu}(0,t)$. Therefore, Adomian polynomials $A_k$'s are simply $A_k=-\lambda^\nu I_t^\alpha p^\alpha_{\nu,k}(0,t)$. Substitute $p^\alpha_{\nu}(0,t)=\sum_{k=0}^{\infty}p^\alpha_{\nu,k}(0,t)$ in (\ref{kol}) and apply ADM (see (\ref{1.4})), to get
\begin{equation*}\label{2.7kk}
\sum_{k=0}^{\infty}p^\alpha_{\nu,k}(0,t)=p^\alpha_{\nu}(0,0)-\lambda^\nu\sum_{k=0}^{\infty} I^\alpha_t p^\alpha_{\nu,k}(0,t).
\end{equation*}
Thus, $p^\alpha_{\nu,0}(0,t)=p^\alpha_{\nu}(0,0)=1$ and $p^\alpha_{\nu,k}(0,t)=-\lambda^\nu I^\alpha_t p^\alpha_{\nu,k-1}(0,t)$, $k\geq 1$.\\
Hence,
\begin{align*}
p^\alpha_{\nu,1}(0,t)&=-\lambda^\nu I^\alpha_t p^\alpha_{\nu,0}(0,t)
=-\lambda^\nu I^\alpha_tt^0
=\frac{-\lambda^\nu t^\alpha}{\Gamma(\alpha+1)},\\
p^\alpha_{\nu,2}(0,t)&=-\lambda^\nu I^\alpha_t p^\alpha_{\nu,1}(0,t)
=\frac{\lambda^{2\nu}}{\Gamma(\alpha+1)}I^\alpha_t t^\alpha
=\frac{(-\lambda^\nu t^\alpha)^2}{\Gamma(2\alpha+1)},\\
p^\alpha_{\nu,3}(0,t)&=-\lambda^\nu I^\alpha_t p^\alpha_{\nu,2}(0,t)
=\frac{-\lambda^{3\nu}}{\Gamma(2\alpha+1)}I^\alpha_t t^{2\alpha}
=\frac{(-\lambda^\nu t^\alpha)^3}{\Gamma(3\alpha+1)}.
\end{align*}
Let
\begin{equation}
p^\alpha_{\nu,k-1}(0,t)=\frac{(-\lambda^\nu t^\alpha)^{k-1}}{\Gamma((k-1)\alpha+1)}.
\end{equation}
Then
\begin{equation*}
p^\alpha_{\nu,k}(0,t)=-\lambda^\nu I^\alpha_t p^\alpha_{\nu,k-1}(0,t)
=\frac{(-\lambda^\nu)^k}{\Gamma((k-1)\alpha+1)} I^\alpha_t t^{(k-1)\alpha}
=\frac{(-\lambda^\nu t^\alpha)^k}{\Gamma(k\alpha+1)},\ \ k\geq 0.
\end{equation*}
Therefore
\begin{equation}
p^\alpha_\nu(0,t)=\sum_{k=0}^{\infty}\frac{(-\lambda^\nu t^\alpha)^k}{\Gamma(k\alpha+1)},
\end{equation}
{\it i.e.} the result holds for $n=0$.

For $n=1$, substituting $p^\alpha_{\nu}(1,t)=\sum_{k=0}^{\infty}p^\alpha_{\nu,k}(1,t)$ in (\ref{kol}) and applying ADM, we get
\begin{equation*}\label{2.7kk}
\sum_{k=0}^{\infty}p^\alpha_{\nu,k}(1,t)=p^\alpha_{\nu}(1,0)-\lambda^\nu\sum_{k=0}^{\infty} I^\alpha_t \left(p^\alpha_{\nu,k}(1,t)-\nu p^\alpha_{\nu,k}(0,t)\right).
\end{equation*}
Thus, $p^\alpha_{\nu,0}(1,t)=p^\alpha_{\nu}(1,0)=0$ and $p^\alpha_{\nu,k}(1,t)=-\lambda^\nu I^\alpha_t  \left(p^\alpha_{\nu,k-1}(1,t)-\nu p^\alpha_{\nu,k-1}(0,t)\right)$, $k\geq 1$.\\
Hence,
\begin{align*}
p^\alpha_{\nu,1}(1,t)&=-\lambda^\nu I^\alpha_t  \left(p^\alpha_{\nu,0}(1,t)-\nu p^\alpha_{\nu,0}(0,t)\right)=\frac{-\nu(-\lambda^\nu t^\alpha)}{\Gamma(\alpha+1)},\\
p^\alpha_{\nu,2}(1,t)&=-\lambda^\nu I^\alpha_t  \left(p^\alpha_{\nu,1}(1,t)-\nu p^\alpha_{\nu,1}(0,t)\right)=\frac{-2\nu(-\lambda^{\nu} t^{\alpha})^2}{\Gamma(2\alpha+1)},\\
p^\alpha_{\nu,3}(1,t)&=-\lambda^\nu I^\alpha_t  \left(p^\alpha_{\nu,2}(1,t)-\nu p^\alpha_{\nu,2}(0,t)\right)=\frac{-3\nu(-\lambda^{\nu} t^{\alpha})^3}{\Gamma(3\alpha+1)}.
\end{align*}
Let
\begin{equation}
p^\alpha_{\nu,k-1}(1,t)=\frac{-(k-1)\nu(-\lambda^{\nu} t^{\alpha})^{k-1}}{\Gamma((k-1)\alpha+1)}.
\end{equation}
Then
\begin{align*}
p^\alpha_{\nu,k}(1,t)&=-\lambda^\nu I^\alpha_t  \left(p^\alpha_{\nu,k-1}(1,t)-\nu p^\alpha_{\nu,k-1}(0,t)\right)\\
&=-\lambda^\nu I^\alpha_t  \left(\frac{-(k-1)\nu(-\lambda^{\nu} t^{\alpha})^{k-1}}{\Gamma((k-1)\alpha+1)}-\frac{\nu(-\lambda^{\nu} t^{\alpha})^{k-1}}{\Gamma((k-1)\alpha+1)}\right)\\
&=\frac{-k\nu(-\lambda^{\nu} t^{\alpha})^k}{\Gamma(k\alpha+1)},\ \ k\geq 0.
\end{align*}
Therefore
\begin{equation}
p^\alpha_\nu(1,t)=-\sum_{k=0}^{\infty}\frac{k\nu(-\lambda^{\nu} t^{\alpha})^k}{\Gamma(k\alpha+1)},
\end{equation}
{\it i.e.} the result holds for $n=1$.

For $n=2$, substituting $p^\alpha_{\nu}(2,t)=\sum_{k=0}^{\infty}p^\alpha_{\nu,k}(2,t)$ in (\ref{kol}) and applying ADM, we get
\begin{equation*}
\sum_{k=0}^{\infty}p^\alpha_{\nu,k}(2,t)=p^\alpha_{\nu}(2,0)-\lambda^\nu\sum_{k=0}^{\infty} I^\alpha_t \left(p^\alpha_{\nu,k}(2,t)-\nu p^\alpha_{\nu,k}(1,t)+\frac{\nu(\nu-1)}{2}p^\alpha_{\nu,k}(0,t)\right).
\end{equation*}
Thus, $p^\alpha_{\nu,0}(2,t)=p^\alpha_{\nu}(2,0)=0$ and
\begin{equation*}
p^\alpha_{\nu,k}(2,t)=-\lambda^\nu I^\alpha_t  \left(p^\alpha_{\nu,k-1}(2,t)-\nu p^\alpha_{\nu,k-1}(1,t)+\frac{\nu(\nu-1)}{2}p^\alpha_{\nu,k-1}(0,t)\right),\ \ k\geq 1.
\end{equation*}
Hence,
\begin{align*}
p^\alpha_{\nu,1}(2,t)&=-\lambda^\nu I^\alpha_t  \left(p^\alpha_{\nu,0}(2,t)-\nu p^\alpha_{\nu,0}(1,t)+\frac{\nu(\nu-1)}{2}p^\alpha_{\nu,0}(0,t)\right)=\frac{\nu(\nu-1)(-\lambda^\nu t^\alpha)}{2\Gamma(\alpha+1)},\\
p^\alpha_{\nu,2}(2,t)&=-\lambda^\nu I^\alpha_t  \left(p^\alpha_{\nu,1}(2,t)-\nu p^\alpha_{\nu,1}(1,t)+\frac{\nu(\nu-1)}{2}p^\alpha_{\nu,1}(0,t)\right)=\frac{2\nu(2\nu-1)(-\lambda^\nu t^\alpha)^2}{2\Gamma(2\alpha+1)},\\
p^\alpha_{\nu,3}(2,t)&=-\lambda^\nu I^\alpha_t  \left(p^\alpha_{\nu,2}(2,t)-\nu p^\alpha_{\nu,2}(1,t)+\frac{\nu(\nu-1)}{2}p^\alpha_{\nu,2}(0,t)\right)=\frac{3\nu(3\nu-1)(-\lambda^\nu t^\alpha)^3}{2\Gamma(3\alpha+1)}.
\end{align*}
Let
\begin{equation}
p^\alpha_{\nu,k-1}(2,t)=\frac{(k-1)\nu((k-1)\nu-1)(-\lambda^\nu t^\alpha)^{k-1}}{2\Gamma((k-1)\alpha+1)}.
\end{equation}
Then
\begin{align*}
p^\alpha_{\nu,k}(2,t)&=-\lambda^\nu I^\alpha_t  \left(p^\alpha_{\nu,k-1}(2,t)-\nu p^\alpha_{\nu,k-1}(1,t)+\frac{\nu(\nu-1)}{2}p^\alpha_{\nu,k-1}(0,t)\right)\\
&=\frac{k\nu(k\nu-1)(-\lambda^\nu t^\alpha)^k}{2\Gamma(k\alpha+1)},\ \ k\geq 0.
\end{align*}
Therefore
\begin{equation}
p^\alpha_\nu(2,t)=\frac{1}{2}\sum_{k=0}^{\infty}\frac{k\nu(k\nu-1)(-\lambda^\nu t^\alpha)^k}{\Gamma(k\alpha+1)},
\end{equation}
{\it i.e.} the result holds for $n=2$.

Now assume for $m>2$ the following:
\begin{equation}
p^\alpha_{\nu,k}(m,t)=\frac{(-1)^m}{m!}\frac{(k\nu)_m(-\lambda^\nu t^\alpha)^k}{\Gamma(k\alpha+1)},\ \ k\geq 0,
\end{equation}
{\it i.e.} (\ref{2.4kkt}) holds for $n=m$, where $p^\alpha_{\nu}(m,t)=\sum_{k=0}^{\infty}p^\alpha_{\nu,k}(m,t)$ and $(k\nu)_m$ denotes the falling factorial.

For $n=m+1$, substituting $p^\alpha_{\nu}(m+1,t)=\sum_{k=0}^{\infty}p^\alpha_{\nu,k}(m+1,t)$ in (\ref{kol}) and applying ADM, we get
\begin{equation*}\label{2.7kk}
\sum_{k=0}^{\infty}p^\alpha_{\nu,k}(m+1,t)=p^\alpha_{\nu}(m+1,0)-\lambda^\nu\sum_{k=0}^{\infty} I^\alpha_t \sum_{r=0}^{m+1} (-1)^r\frac{(\nu)_r}{r!}p^\alpha_{\nu,k}(m+1-r,t).
\end{equation*}
Thus, $p^\alpha_{\nu,0}(m+1,t)=p^\alpha_{\nu}(m+1,0)=0$ and 
\begin{equation*}
p^\alpha_{\nu,k}(m+1,t)=-\lambda^\nu I^\alpha_t \sum_{r=0}^{m+1} (-1)^r\frac{(\nu)_r}{r!}p^\alpha_{\nu,k-1}(m+1-r,t),\ \ k\geq 1.
\end{equation*}
Hence,
\begin{align*}
p^\alpha_{\nu,1}(m+1,t)&=-\lambda^\nu I^\alpha_t \sum_{r=0}^{m+1} (-1)^r\frac{(\nu)_r}{r!}p^\alpha_{\nu,0}(m+1-r,t)\\
&=-\lambda^\nu\frac{(-1)^{m+1}}{(m+1)!}(\nu)_{m+1}I^\alpha_tt^0=\frac{(-1)^{m+1}}{(m+1)!}\frac{(\nu)_{m+1}(-\lambda^\nu t^\alpha)}{\Gamma(\alpha+1)},\\
p^\alpha_{\nu,2}(m+1,t)&=-\lambda^\nu I^\alpha_t \sum_{r=0}^{m+1} (-1)^r\frac{(\nu)_r}{r!}p^\alpha_{\nu,1}(m+1-r,t)\\
&=\frac{\lambda^{2\nu}(-1)^{m+1}}{(m+1)!\Gamma(\alpha+1)}I^\alpha_tt^\alpha\sum_{r=0}^{m+1} \frac{(m+1)!}{r!(m+1-r)!}(\nu)_r(\nu)_{m+1-r}\\
&=\frac{(-1)^{m+1}}{(m+1)!}\frac{(2\nu)_{m+1}(-\lambda^\nu t^\alpha)^2}{\Gamma(2\alpha+1)},
\end{align*}
where the last step follows from the binomial theorem for falling factorials. Now let
\begin{equation}
p^\alpha_{\nu,k-1}(m+1,t)=\frac{(-1)^{m+1}}{(m+1)!}\frac{((k-1)\nu)_{m+1}(-\lambda^\nu t^\alpha)^{k-1}}{\Gamma((k-1)\alpha+1)}.
\end{equation}
Then
\begin{align*}
p^\alpha_{\nu,k}(m+1,t)&=-\lambda^\nu I^\alpha_t \sum_{r=0}^{m+1} (-1)^r\frac{(\nu)_r}{r!}p^\alpha_{\nu,k-1}(m+1-r,t)\\
&=\frac{(-\lambda^\nu)^k(-1)^{m+1}I^\alpha_tt^{(k-1)\alpha}}{(m+1)!\Gamma((k-1)\alpha+1)}\sum_{r=0}^{m+1} \frac{(m+1)!}{r!(m+1-r)!}(\nu)_r((k-1)\nu)_{m+1-r}\\
&=\frac{(-1)^{m+1}}{(m+1)!}\frac{(k\nu)_{m+1}(-\lambda^\nu t^\alpha)^k}{\Gamma(k\alpha+1)},\ \ k\geq 0.
\end{align*}
Therefore
\begin{equation*}
p^\alpha_\nu(m+1,t)=\frac{(-1)^{m+1}}{(m+1)!}\sum_{k=0}^{\infty‎}\frac{(-\lambda^\nu t^\alpha)^k}{\Gamma(k\alpha+1)}\frac{\Gamma(k\nu+1)}{\Gamma(k\nu-m)},
\end{equation*}
and thus the result holds for $n=m+1$. This completes the proof.
\end{proof}
\begin{remark}
The state probabilities of TFPP and SFPP can be obtained as special cases of the above result {\textit i.e.} by substituting $\nu=1$ and $\alpha=1$ in Theorem \ref{t2kk}, respectively. However, the difference-differential equations (\ref{r2newq}) and (\ref{z3}) governing the state probabilities of TFPP and SFPP can also be independently solved using ADM to obtain the corresponding distributions.
\end{remark}
\begin{corollary}
Let the random variable $X^\alpha_\nu$ be the waiting time of the first space and time fractional Poisson event. Then the following determine the distribution of $X^\alpha_\nu$:
\begin{equation}\label{77}
\mathrm{Pr}\{X^\alpha_\nu>t\}=\mathrm{Pr}\{N^\alpha_\nu(t,\lambda)=0\}=E_{\alpha}(-\lambda^\nu t^\alpha),\ \ t\geq 0,
\end{equation}
where $E_{\alpha}(.)$ is Mittag-Leffler function defined by 
\begin{equation*}
E_{\alpha}(x)=\sum_{k=0}^{\infty‎}\frac{x^k}{\Gamma(k\alpha+1)},\ \ \alpha>0,\ x\in\mathrm{R}.
\end{equation*}
\end{corollary}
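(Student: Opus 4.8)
The plan is to obtain the corollary as an immediate specialization of Theorem~\ref{t2kk} to the state $n=0$. First I would record the event identity underlying the first equality in \eqref{77}: since the counting process $\{N^\alpha_\nu(t,\lambda)\}$ has non-decreasing sample paths and starts at $N^\alpha_\nu(0,\lambda)=0$, the waiting time of the first event, $X^\alpha_\nu:=\inf\{t\geq0:\, N^\alpha_\nu(t,\lambda)>0\}$, satisfies $\{X^\alpha_\nu>t\}=\{N^\alpha_\nu(t,\lambda)=0\}$ for every $t\geq0$; taking probabilities gives $\mathrm{Pr}\{X^\alpha_\nu>t\}=p^\alpha_\nu(0,t)$.

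Next I would substitute $n=0$ in \eqref{2.4kkt}. The prefactor $(-1)^0/0!$ equals $1$ and the ratio $\Gamma(k\nu+1)/\Gamma(k\nu+1-n)$ collapses to $\Gamma(k\nu+1)/\Gamma(k\nu+1)=1$ for each $k\geq0$, so that
\[
\mathrm{Pr}\{N^\alpha_\nu(t,\lambda)=0\}=p^\alpha_\nu(0,t)=\sum_{k=0}^{\infty}\frac{(-\lambda^\nu t^\alpha)^k}{\Gamma(k\alpha+1)}.
\]
Matching this power series against the definition of the Mittag-Leffler function $E_\alpha(x)=\sum_{k\geq0}x^k/\Gamma(k\alpha+1)$ with $x=-\lambda^\nu t^\alpha$ yields $\mathrm{Pr}\{X^\alpha_\nu>t\}=E_\alpha(-\lambda^\nu t^\alpha)$, which is \eqref{77}.

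There is no genuine obstacle here, the corollary being a one-line consequence of Theorem~\ref{t2kk}; the only point worth spelling out is the event identity $\{X^\alpha_\nu>t\}=\{N^\alpha_\nu(t,\lambda)=0\}$, which relies solely on monotonicity of the sample paths and should be stated explicitly because the STFPP, unlike the classical Poisson process, can have jumps exceeding one on account of the space-fractional component. As a consistency check one may also note that letting $\alpha\uparrow1$ reduces $E_1(-\lambda^\nu t)$ to $e^{-\lambda^\nu t}$, recovering the exponential law of the first waiting time of the SFPP.
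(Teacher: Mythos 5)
Your proposal is correct and matches the paper's (implicit) argument: the corollary is stated as an immediate consequence of Theorem \ref{t2kk}, whose proof already displays $p^\alpha_\nu(0,t)=\sum_{k\geq0}(-\lambda^\nu t^\alpha)^k/\Gamma(k\alpha+1)$ in the $n=0$ step, and your specialization of \eqref{2.4kkt} together with the event identity $\{X^\alpha_\nu>t\}=\{N^\alpha_\nu(t,\lambda)=0\}$ is exactly what is needed. Your explicit remark on why the event identity holds despite the possibility of jumps larger than one is a sensible addition but not a departure from the paper's route.
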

\begin{remark}
The special cases $\alpha=1$ and $\nu=1$ gives the corresponding waiting times of SFPP and TFPP {\it i.e.}
\begin{align*}
\mathrm{Pr}\{X_\nu>t\}&=e^{-\lambda^\nu t},\ \ t\geq0,\\
\mathrm{Pr}\{X^\alpha>t\}&=E_{\alpha}(-\lambda t^\alpha),\ \ t\geq0,
\end{align*}
respectively.
\end{remark}

\section{A generalization of the STFPP}
{\small{\sc Saigo}} (1978) introduced the fractional integral operators with Gauss hypergeometric function as the kernel, which are interesting generalizations of the classical Riemann-Liouville and Erd\'elyi-Kober fractional operators. For real numbers $\alpha>0,\beta$ and $\gamma$, the generalized fractional integral associated with Gauss hypergeometric function is defined by (see {\small{\sc Saigo}} (1978) and {\small{\sc Srivastava}} {\it et al.} (1988)):
\begin{equation}\label{1.8}
I_t^{\alpha,\beta,\gamma}f(t)=\frac{t^{-\alpha-\beta}}{\Gamma(\alpha)}\int_0^t(t-s)^{\alpha-1}{}_{2}F_1\left(\alpha+\beta,-\gamma;\alpha;1-\frac{s}{t}\right)f(s)\,\mathrm{d}s,
\end{equation}
where $f(t)$ is a continuous real valued function on $(0,\infty)$ of order $O(t^\epsilon)$, $\epsilon>\max\{0,\beta-\gamma\}-1$. The Gauss hypergeometric function ${}_{2}F_1(a,b;c;z)$ is defined by
\begin{equation*}\label{1.12}
{}_{2}F_1(a,b;c;z)=\sum_{k=0}^{\infty}\frac{(a)_{k}(b)_{k}}{(c)_{k}}\frac{z^k}{k!},\ \ |z|<1,\ z\in\mathbb{C},
\end{equation*}
where $a,b\in\mathbb{C}$ and $c\in\mathbb{C}\setminus\mathbb{Z}_0^-$. The corresponding fractional differential operator ({\small{\sc Saigo and Maeda}} (1998)) is
\begin{equation}\label{1.11}
D_t^{\alpha,\beta,\gamma}f(t)=\frac{\mathrm{d}^m}{\mathrm{d}t^{m}}I_t^{-\alpha+m,-\beta-m,\alpha+\gamma-m}f(t),
\end{equation}
where $m-1<\alpha\leq m$, $m\in\mathbb{N}$. Substituting $\beta=-\alpha$ ($\beta=0$) in (\ref{1.8}) and (\ref{1.11}), we get the Riemann-Liouville (Erd\'elyi-Kober) integral and differential operator, respectively.

The following is a known result for Saigo fractional integral (see Lemma 3, {\small{\sc Srivastava}} {\it et al.} (1988)).
\begin{lemma}\label{lhg}
Let $\alpha>0,\beta,\gamma$ and $\rho$ be real numbers such that $\rho>\beta-\gamma$. Then
\begin{equation*}\label{2.4}
I_{t}^{\alpha,\beta,\gamma}t^{\rho-1}=\frac{\Gamma(\rho)\Gamma(\rho-\beta+\gamma)}{\Gamma(\rho-\beta)\Gamma(\rho+\alpha+\gamma)}t^{\rho-\beta-1}.
\end{equation*}
\end{lemma}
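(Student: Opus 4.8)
The plan is to compute the defining integral of the Saigo operator $I_t^{\alpha,\beta,\gamma}$ at $f(s)=s^{\rho-1}$ directly: a change of variables reduces it to a Beta integral once the Gauss hypergeometric kernel is expanded in its power series, and the resulting constant is then identified as a Gauss hypergeometric function at unit argument.

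First I would substitute $s=tu$ in
\[
I_t^{\alpha,\beta,\gamma}t^{\rho-1}=\frac{t^{-\alpha-\beta}}{\Gamma(\alpha)}\int_0^t(t-s)^{\alpha-1}\,{}_{2}F_1\Bigl(\alpha+\beta,-\gamma;\alpha;1-\tfrac{s}{t}\Bigr)s^{\rho-1}\,\mathrm{d}s,
\]
so that $(t-s)^{\alpha-1}=t^{\alpha-1}(1-u)^{\alpha-1}$, $s^{\rho-1}=t^{\rho-1}u^{\rho-1}$, $\mathrm{d}s=t\,\mathrm{d}u$ and $1-s/t=1-u$. Collecting the powers of $t$ yields the factor $t^{\rho-\beta-1}$ in front, and what remains to be shown is
\[
\frac{1}{\Gamma(\alpha)}\int_0^1 u^{\rho-1}(1-u)^{\alpha-1}\,{}_{2}F_1\bigl(\alpha+\beta,-\gamma;\alpha;1-u\bigr)\,\mathrm{d}u=\frac{\Gamma(\rho)\Gamma(\rho-\beta+\gamma)}{\Gamma(\rho-\beta)\Gamma(\rho+\alpha+\gamma)}.
\]
Next I would set $v=1-u$ to move the argument of ${}_{2}F_1$ to $v$, expand ${}_{2}F_1(\alpha+\beta,-\gamma;\alpha;v)=\sum_{k\ge 0}\frac{(\alpha+\beta)_k(-\gamma)_k}{(\alpha)_k\,k!}\,v^k$, and interchange the sum with the integral. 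Each term then leaves a Beta integral $\int_0^1 v^{\alpha+k-1}(1-v)^{\rho-1}\,\mathrm{d}v=\frac{\Gamma(\alpha)\,(\alpha)_k\,\Gamma(\rho)}{\Gamma(\alpha+\rho)\,(\alpha+\rho)_k}$ (this is where $\rho>0$, implicit in the admissibility condition on $f$, is used); cancelling the $\Gamma(\alpha)$ and $(\alpha)_k$ factors collapses the sum to
\[
\frac{\Gamma(\rho)}{\Gamma(\alpha+\rho)}\sum_{k=0}^{\infty}\frac{(\alpha+\beta)_k(-\gamma)_k}{(\alpha+\rho)_k\,k!}=\frac{\Gamma(\rho)}{\Gamma(\alpha+\rho)}\,{}_{2}F_1\bigl(\alpha+\beta,-\gamma;\alpha+\rho;1\bigr).
\]
Finally I would invoke Gauss's summation theorem ${}_{2}F_1(a,b;c;1)=\Gamma(c)\Gamma(c-a-b)/\bigl(\Gamma(c-a)\Gamma(c-b)\bigr)$ with $a=\alpha+\beta$, $b=-\gamma$, $c=\alpha+\rho$; here $c-a-b=\rho-\beta+\gamma$, so the hypothesis $\rho>\beta-\gamma$ is exactly the condition making both the series at $1$ and Gauss's formula valid, and with $c-a=\rho-\beta$, $c-b=\rho+\alpha+\gamma$ the gamma quotient simplifies to the asserted value.

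The step I expect to be the main obstacle is justifying the termwise integration, since the hypergeometric series converges only for $|v|<1$ while the integral runs up to $v=1$. I would handle this by integrating first over $[0,1-\epsilon]$, where the convergence is uniform, and then letting $\epsilon\to 0$: the hypothesis $\rho>\beta-\gamma$ ensures, via Abel's theorem and the convergence of ${}_{2}F_1(\alpha+\beta,-\gamma;\alpha;1)$, that the remaining boundary contribution vanishes, and dominated convergence then applies. Alternatively, one can first establish the identity when $-\gamma$ is a non-negative integer, so that the series terminates and the interchange is trivial, and extend to general $\gamma$ by analytic continuation. Everything else is routine bookkeeping with Pochhammer symbols via $\Gamma(x+k)=\Gamma(x)(x)_k$.
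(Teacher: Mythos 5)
The paper itself offers no proof of this lemma: it is quoted verbatim as a known result, with a pointer to Lemma~3 of {\small{\sc Srivastava}} {\it et al.} (1988). Your self-contained derivation is therefore not comparable to anything in the paper, but it is correct and is essentially the standard argument from that reference: rescale $s=tu$ to extract the factor $t^{\rho-\beta-1}$, flip to $v=1-u$, expand the ${}_2F_1$ kernel, integrate termwise against the Beta density to collapse $(\alpha)_k$ into $(\alpha+\rho)_k$, and close with Gauss's summation theorem, whose validity condition $c-a-b=\rho-\beta+\gamma>0$ is exactly the stated hypothesis. Your observation that $\rho>0$ is additionally needed (and is supplied by the admissibility condition $f(t)=O(t^{\epsilon})$, $\epsilon>\max\{0,\beta-\gamma\}-1$ in the definition of $I_t^{\alpha,\beta,\gamma}$) is a point the lemma's bare statement glosses over. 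One small imprecision: in justifying the interchange of sum and integral you appeal to ``the convergence of ${}_2F_1(\alpha+\beta,-\gamma;\alpha;1)$,'' but that series converges at $1$ only when $\gamma-\beta>0$, which the hypotheses do not guarantee; when $\gamma-\beta\le 0$ the kernel blows up like $(1-v)^{\gamma-\beta}$ as $v\to 1$, and it is precisely the condition $\rho>\beta-\gamma$ that keeps $(1-v)^{\rho-1+\gamma-\beta}$ integrable. A cleaner justification is Tonelli's theorem: the termwise-integrated series of absolute values has general term of order $k^{\beta-\gamma-\rho-1}$, summable exactly when $\rho>\beta-\gamma$. Your fallback --- prove it for $-\gamma\in\mathbb{N}_0$ where the series terminates and extend by analytic continuation in $\gamma$ --- is also perfectly sound.
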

\noindent For $\beta=-\alpha$, the above result reduces to Lemma \ref{lem1}.
\subsection{Caputo-type modification of Saigo fractional derivative}
 {\small{\sc Rao {\it et al.}}} (2010) introduced the Caputo-type fractional derivative that involves the Gauss hypergeometric function in the kernel. The Caputo fractional differential operator of order $\alpha>0$ associated with the Gauss hypergeometric function is defined by
\begin{equation}\label{1ff}
{}_{*}D_t^{\alpha,\beta,\gamma}f(t)=I_t^{-\alpha+m,-\beta-m,\alpha+\gamma-m}f^{(m)}(t),
\end{equation}
where $m-1<\alpha\leq m$, $m\in\mathbb{N}$ and $f^{(m)}(t)=\frac{\mathrm{d}^m}{\mathrm{d}t^{m}}f(t)$. 

The following semi group property of Saigo integral operator was used to prove Theorem 6 of {\small{\sc Rao {\it et al.}}} (2010):
\begin{equation}\label{3.2lf}
I_t^{\alpha,\beta,\gamma}I_t^{\eta,\delta,\xi}f(t)=I_t^{\eta,\delta,\xi}I_t^{\alpha,\beta,\gamma}f(t).
\end{equation}
We claim that (\ref{3.2lf}) is false and hence Theorem 6 of {\small{\sc Rao {\it et al.}}} (2010) does not hold true for ${}_{*}D_t^{\alpha,\beta,\gamma}$. The counter example follows:
\begin{example}
For $\alpha>0,\eta>0$, $\rho>\max\{\beta-\gamma,\delta-\xi,\beta-\gamma+\delta,\delta-\xi+\beta\}$ and $f(t)=t^{\rho-1}$, it is easy to see using Lemma \ref{lhg} that (\ref{3.2lf}) is contradicted.
\end{example}
Next we introduce a new Caputo version of the Saigo fractional derivative by slight modification of (\ref{1ff}). For real numbers $\alpha>0,\beta$ and $\gamma$, we define a new version of the Caputo fractional differential operator associated with the Gauss hypergeometric function as follows:
\begin{equation}\label{ghj}
\partial_t^{\alpha,\beta,\gamma}f(t)=I_t^{-\alpha+m,-\beta-m,\alpha+\gamma}f^{(m)}(t),
\end{equation}
where $m-1<\alpha\leq m$, $m\in\mathbb{N}$.

Now we show that Theorem 6 of {\small{\sc Rao {\it et al.}}} (2010) holds for new Caputo version of the Saigo fractional derivative $\partial_t^{\alpha,\beta,\gamma}$. The following semi group property of Saigo integral operator (see Eq. (2.22) {\small{\sc Saigo}} (1978)) will be used to prove the next result:
\begin{equation}\label{fghtr}
I_t^{\alpha,\beta,\gamma}I_t^{\eta,\delta,\alpha+\gamma}f(t)=I_t^{\alpha+\eta,\beta+\delta,\gamma}f(t).
\end{equation}
\begin{theorem}
The following composition holds:
\begin{equation}\label{5.236}
I_t^{\alpha,\beta,\gamma}\partial_t^{\alpha,\beta,\gamma}f(t)=f(t)-\sum_{k=0}^{m-1}\frac{f^{(k)}(0)}{k!}t^k,
\end{equation}
where $m-1<\alpha\leq m$.
\end{theorem}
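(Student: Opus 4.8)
The plan is to reduce the composition on the left-hand side of (\ref{5.236}) to a classical integer-order Riemann--Liouville integral by invoking the semigroup identity (\ref{fghtr}). Throughout one assumes $f\in C^{m}(0,\infty)$ with $f^{(m)}$ continuous and of admissible growth order, so that each Saigo integral written below is well defined; I would record these regularity hypotheses explicitly at the start.

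First I would unwind the definition (\ref{ghj}) of the new Caputo-type operator to get
\[
I_t^{\alpha,\beta,\gamma}\partial_t^{\alpha,\beta,\gamma}f(t)=I_t^{\alpha,\beta,\gamma}\,I_t^{-\alpha+m,\,-\beta-m,\,\alpha+\gamma}f^{(m)}(t).
\]
The key observation is that the inner operator now carries third parameter $\alpha+\gamma$, which is precisely the value demanded by (\ref{fghtr}); this is exactly why the third index in (\ref{ghj}) was modified relative to the operator (\ref{1ff}) of Rao \textit{et al.} Applying (\ref{fghtr}) with $\eta=-\alpha+m$ and $\delta=-\beta-m$ then gives
\[
I_t^{\alpha,\beta,\gamma}\,I_t^{-\alpha+m,\,-\beta-m,\,\alpha+\gamma}f^{(m)}(t)=I_t^{\alpha+\eta,\,\beta+\delta,\,\gamma}f^{(m)}(t)=I_t^{m,\,-m,\,\gamma}f^{(m)}(t).
\]

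Next I would identify $I_t^{m,-m,\gamma}$ with the ordinary $m$-fold integral: setting $\beta=-\alpha$ in the definition of the Saigo integral collapses the Gauss hypergeometric kernel to $1$ (because ${}_{2}F_1(0,-\gamma;\alpha;z)\equiv 1$), so $I_t^{\alpha,-\alpha,\gamma}=I_t^{\alpha}$, and in particular $I_t^{m,-m,\gamma}=I_t^{m}$ for every $\gamma$. Hence
\[
I_t^{\alpha,\beta,\gamma}\partial_t^{\alpha,\beta,\gamma}f(t)=I_t^{m}f^{(m)}(t)=\frac{1}{(m-1)!}\int_0^t(t-s)^{m-1}f^{(m)}(s)\,\mathrm{d}s,
\]
and the right-hand side is the integral form of the Taylor remainder, equal to $f(t)-\sum_{k=0}^{m-1}\frac{f^{(k)}(0)}{k!}t^{k}$; equivalently one iterates (\ref{ythw}) with $\alpha=1$ exactly $m$ times, using $I_t^{m}=I_t^{1}\circ\cdots\circ I_t^{1}$. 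This is (\ref{5.236}), and for $m=1$ it is the Saigo analogue of (\ref{ythw}).

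The main obstacle is making the application of (\ref{fghtr}) rigorous: one must check that $f^{(m)}$ satisfies the continuity and growth conditions under which (\ref{fghtr}) was established in Saigo (1978), and that the intermediate function $I_t^{-\alpha+m,-\beta-m,\alpha+\gamma}f^{(m)}$ again belongs to the class on which $I_t^{\alpha,\beta,\gamma}$ acts (one should also track the edge case $\alpha=m$, where $I_t^{0,\cdot,\cdot}$ is to be read as the identity). Granted these hypotheses on $f$, the three displayed reductions above complete the argument.
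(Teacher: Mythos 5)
Your proposal is correct and follows essentially the same route as the paper's own proof: unwind the definition (\ref{ghj}), apply the semigroup identity (\ref{fghtr}) with $\eta=-\alpha+m$, $\delta=-\beta-m$ to obtain $I_t^{m,-m,\gamma}=I_t^{m}$, and finish with the Taylor-remainder identity $I_t^{m}f^{(m)}(t)=f(t)-\sum_{k=0}^{m-1}\frac{f^{(k)}(0)}{k!}t^{k}$ (the paper cites Lemma 2.22 of Kilbas \textit{et al.} for this last step). Your added remarks on regularity hypotheses and the collapse of the hypergeometric kernel are sensible elaborations of what the paper leaves implicit.
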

\begin{proof}
Consider
\begin{align*}
I_t^{\alpha,\beta,\gamma}\partial_t^{\alpha,\beta,\gamma}f(t)=I_t^{\alpha,\beta,\gamma}I_t^{-\alpha+m,-\beta-m,\alpha+\gamma}f^{(m)}(t)=I_t^{m,-m,\gamma}f^{(m)}(t)=I_t^{m}f^{(m)}(t),
\end{align*}
and the result follows on using Lemma 2.22 of {\small{\sc Kilbas}} {\it et al.} (2006).
\end{proof}
As a special case we have
\begin{equation}\label{2.478}
I_t^{\alpha,\beta,\gamma}\partial_t^{\alpha,\beta,\gamma}f(t)=f(t)-f(0),\ \ 0<\alpha\leq 1.
\end{equation}
\subsection{Saigo space and time fractional Poisson process}
We define the Saigo space and time fractional Poisson process (SSTFPP) $\{N^{\alpha,\beta,\gamma}_\nu(t,\lambda)\}$ for parameters $0<\alpha,\nu\leq 1$, $\beta<0$ and $\gamma\in\mathbb{R}$ as the stochastic process whose state probabilities $p^{\alpha,\beta,\gamma}_\nu(n,t)=\mathrm{Pr}\{N^{\alpha,\beta,\gamma}_\nu(t,\lambda)=n\}$, satisfies
\begin{equation}\label{rnew}
\partial_{t}^{\alpha,\beta,\gamma} p^{\alpha,\beta,\gamma}_\nu(n,t)=-\lambda^\nu(1-B)^\nu p^{\alpha,\beta,\gamma}_\nu(n,t),\ \ n\geq 0,
\end{equation}
with $p^{\alpha,\beta,\gamma}_\nu(-1,t)=0$ and subject to the initial conditions $p^{\alpha,\beta,\gamma}_\nu(0,0)=1$ and $p^{\alpha,\beta,\gamma}_\nu(n,0)=0$, $n\geq 1$. Also, (\ref{rnew}) can be rewritten as
\begin{equation}\label{rnewq}
\partial_{t}^{\alpha,\beta,\gamma} p^{\alpha,\beta,\gamma}_\nu(n,t)=-\lambda^\nu\sum_{r=0}^n (-1)^r\frac{(\nu)_r}{r!}p^{\alpha,\beta,\gamma}_{\nu}(n-r,t),\ \ n\geq 0.
\end{equation}
For $\beta=-\alpha$, the SSTFPP reduces to STFPP.

\begin{theorem}\label{t2kkt}
The probability mass function, $p^{\alpha,\beta,\gamma}_\nu(n,t)$, of the SSTFPP $\{N^{\alpha,\beta,\gamma}_\nu(t,\lambda)\}$ is
\begin{equation}\label{2.4kkr}
p^{\alpha,\beta,\gamma}_\nu(n,t)=\frac{(-1)^n}{n!}\sum_{k=0}^{\infty‎}\frac{C_k(-\lambda^\nu t^{-\beta})^k}{\Gamma(1-k\beta)}\frac{\Gamma(k\nu+1)}{\Gamma(k\nu+1-n)},\ n\geq0,
\end{equation}
where
\begin{equation}
C_k=\prod_{j=1}^k\frac{\Gamma(1+\gamma-j\beta)}{\Gamma(1+\gamma+\alpha-(j-1)\beta)}.
\end{equation}
\end{theorem}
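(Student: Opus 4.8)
The plan is to mimic the proof of Theorem~\ref{t2kk}, replacing the Caputo derivative by the Saigo derivative $\partial_t^{\alpha,\beta,\gamma}$ and the Riemann--Liouville integral by the Saigo integral $I_t^{\alpha,\beta,\gamma}$. First I would apply $I_t^{\alpha,\beta,\gamma}$ to both sides of (\ref{rnewq}) and use the composition formula (\ref{2.478}), which is valid since $0<\alpha\le 1$, together with the initial conditions, to turn the difference-differential system into the functional equation
\begin{equation*}
p^{\alpha,\beta,\gamma}_\nu(n,t)=p^{\alpha,\beta,\gamma}_\nu(n,0)-\lambda^\nu I_t^{\alpha,\beta,\gamma}\sum_{r=0}^{n}(-1)^r\frac{(\nu)_r}{r!}\,p^{\alpha,\beta,\gamma}_\nu(n-r,t),\qquad n\ge 0 .
\end{equation*}
Since $I_t^{\alpha,\beta,\gamma}$ is linear this is of the form (\ref{1.1}) with no nonlinear part, so the Adomian scheme collapses to $p^{\alpha,\beta,\gamma}_{\nu,0}(n,t)=p^{\alpha,\beta,\gamma}_\nu(n,0)$ and, for $k\ge 1$, $p^{\alpha,\beta,\gamma}_{\nu,k}(n,t)=-\lambda^\nu I_t^{\alpha,\beta,\gamma}\sum_{r=0}^{n}(-1)^r\frac{(\nu)_r}{r!}\,p^{\alpha,\beta,\gamma}_{\nu,k-1}(n-r,t)$.

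Next I would settle the case $n=0$, where $p^{\alpha,\beta,\gamma}_{\nu,0}(0,t)=1=t^{0}$ and $p^{\alpha,\beta,\gamma}_{\nu,k}(0,t)=-\lambda^\nu I_t^{\alpha,\beta,\gamma}p^{\alpha,\beta,\gamma}_{\nu,k-1}(0,t)$. Induction on $k$ gives $p^{\alpha,\beta,\gamma}_{\nu,k}(0,t)=C_k(-\lambda^\nu t^{-\beta})^k/\Gamma(1-k\beta)$: applying Lemma~\ref{lhg} with $\rho=1-(k-1)\beta$ yields
\begin{equation*}
I_t^{\alpha,\beta,\gamma}t^{-(k-1)\beta}=\frac{\Gamma(1-(k-1)\beta)\,\Gamma(1+\gamma-k\beta)}{\Gamma(1-k\beta)\,\Gamma(1+\alpha+\gamma-(k-1)\beta)}\,t^{-k\beta},
\end{equation*}
and the factor produced here is precisely $C_k/C_{k-1}=\Gamma(1+\gamma-k\beta)/\Gamma(1+\gamma+\alpha-(k-1)\beta)$, so the recursion telescopes into the claimed product. (One must check the hypothesis $\rho>\beta-\gamma$ of Lemma~\ref{lhg} for the $\rho$ that occur; since $\beta<0$ the relevant $\rho$ are $\ge 1$, so this holds e.g. whenever $\gamma>\beta-1$.) Summing over $k$ establishes (\ref{2.4kkr}) for $n=0$.

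For the general state I would run an induction on $n$, exactly paralleling Theorem~\ref{t2kk}. Assuming
\begin{equation*}
p^{\alpha,\beta,\gamma}_{\nu,k}(n,t)=\frac{(-1)^n}{n!}\,\frac{(k\nu)_n\,C_k(-\lambda^\nu t^{-\beta})^k}{\Gamma(1-k\beta)},\qquad k\ge 0,
\end{equation*}
for all states up to $n=m$, I would substitute into the recursion for $n=m+1$, run an inner induction on $k$ (its base being $p^{\alpha,\beta,\gamma}_{\nu,0}(m+1,t)=0$), pull the power $t^{-(k-1)\beta}$ through $I_t^{\alpha,\beta,\gamma}$ by Lemma~\ref{lhg} (again producing $C_k/C_{k-1}$ and shifting $\Gamma(1-(k-1)\beta)\to\Gamma(1-k\beta)$), and collapse the $r$-sum via $\sum_{r=0}^{m+1}\binom{m+1}{r}(\nu)_r((k-1)\nu)_{m+1-r}=(k\nu)_{m+1}$, the binomial theorem for falling factorials (the same identity invoked in the proof of Theorem~\ref{t2kk}). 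Summing over $k$ and writing $(k\nu)_{m+1}=\Gamma(k\nu+1)/\Gamma(k\nu-m)$ gives (\ref{2.4kkr}) for $n=m+1$.

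The main obstacle is the bookkeeping of the intertwined (double) induction: at state $m+1$ the recursion for $p^{\alpha,\beta,\gamma}_{\nu,k}(m+1,t)$ involves $p^{\alpha,\beta,\gamma}_{\nu,k-1}(m+1,t)$ itself (the $r=0$ term), so one must verify simultaneously that each application of $I_t^{\alpha,\beta,\gamma}$ advances $C_{k-1}\mapsto C_k$ and the denominator Gamma factor, while the falling-factorial binomial identity takes care of the $\nu$-dependence independently of $k$. A secondary point is confirming the hypothesis of Lemma~\ref{lhg} for the arising exponents and the absolute convergence of the series $\sum_k p^{\alpha,\beta,\gamma}_{\nu,k}(n,t)$, which follows from the growth of the Gamma function just as in the classical STFPP case of Theorem~\ref{t2kk}.
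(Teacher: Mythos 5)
Your proposal is correct and follows essentially the same route as the paper's proof: apply $I_t^{\alpha,\beta,\gamma}$ together with (\ref{2.478}) to get the functional equation, run the linear ADM recursion, push powers of $t$ through the Saigo integral via Lemma \ref{lhg} to produce the telescoping factors $C_k/C_{k-1}$ and the shift $\Gamma(1-(k-1)\beta)\to\Gamma(1-k\beta)$, and collapse the $r$-sum with the binomial theorem for falling factorials inside a double induction on $n$ and $k$. Your side remark about verifying the hypothesis $\rho>\beta-\gamma$ of Lemma \ref{lhg} is a detail the paper passes over silently.
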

\begin{proof}
Applying $I^{\alpha,\beta,\gamma}_t$ on both sides of (\ref{rnewq}) and using (\ref{2.478}), we obtain
\begin{equation}\label{kol7}
p^{\alpha,\beta,\gamma}_{\nu}(n,t)=p^{\alpha,\beta,\gamma}_{\nu}(n,0)-\lambda^\nu I^{\alpha,\beta,\gamma}_t \sum_{r=0}^n (-1)^r\frac{(\nu)_r}{r!}p^{\alpha,\beta,\gamma}_{\nu}(n-r,t),\ \ n\geq 0.
\end{equation}

For $n=0$, substituting $p^{\alpha,\beta,\gamma}_{\nu}(0,t)=\sum_{k=0}^{\infty}p^{\alpha,\beta,\gamma}_{\nu,k}(0,t)$ in (\ref{kol7}) and applying ADM (see (\ref{1.4})), we get
\begin{equation*}\label{2.7kk7}
\sum_{k=0}^{\infty}p^{\alpha,\beta,\gamma}_{\nu,k}(0,t)=p^{\alpha,\beta,\gamma}_{\nu}(0,0)-\lambda^\nu\sum_{k=0}^{\infty} I^{\alpha,\beta,\gamma}_t p^{\alpha,\beta,\gamma}_{\nu,k}(0,t).
\end{equation*}
Thus, $p^{\alpha,\beta,\gamma}_{\nu,0}(0,t)=p^{\alpha,\beta,\gamma}_{\nu}(0,0)=1$ and $p^{\alpha,\beta,\gamma}_{\nu,k}(0,t)=-\lambda^\nu I^{\alpha,\beta,\gamma}_t p^{\alpha,\beta,\gamma}_{\nu,k-1}(0,t)$, $k\geq 1$.\\
Hence,
\begin{align*}
p^{\alpha,\beta,\gamma}_{\nu,1}(0,t)=-\lambda^\nu I^{\alpha,\beta,\gamma}_t p^{\alpha,\beta,\gamma}_{\nu,0}(0,t)
&=-\lambda^\nu I^{\alpha,\beta,\gamma}_tt^0
=\frac{C_1(-\lambda^\nu t^{-\beta})}{\Gamma(1-\beta)},\\
p^{\alpha,\beta,\gamma}_{\nu,2}(0,t)=-\lambda^\nu I^{\alpha,\beta,\gamma}_t p^{\alpha,\beta,\gamma}_{\nu,1}(0,t)&=\frac{\lambda^{2\nu}C_1}{\Gamma(1-\beta)}I^{\alpha,\beta,\gamma}_t t^{-\beta}=\frac{C_2(-\lambda^\nu t^{-\beta})^2}{\Gamma(1-2\beta)},\\
p^{\alpha,\beta,\gamma}_{\nu,3}(0,t)=-\lambda^\nu I^{\alpha,\beta,\gamma}_t p^{\alpha,\beta,\gamma}_{\nu,2}(0,t)&=\frac{-\lambda^{3\nu}C_2}{\Gamma(1-2\beta)}I^{\alpha,\beta,\gamma}_t t^{-2\beta}=\frac{C_3(-\lambda^\nu t^{-\beta})^3}{\Gamma(1-3\beta)}.
\end{align*}
Let
\begin{equation}
p^{\alpha,\beta,\gamma}_{\nu,k-1}(0,t)=\frac{C_{k-1}(-\lambda^\nu t^{-\beta})^{k-1}}{\Gamma(1-(k-1)\beta)}.
\end{equation}
Then
\begin{equation*}
p^{\alpha,\beta,\gamma}_{\nu,k}(0,t)=-\lambda^\nu I^{\alpha,\beta,\gamma}_t p^{\alpha,\beta,\gamma}_{\nu,k-1}(0,t)
=\frac{(-\lambda^\nu)^kC_{k-1}}{\Gamma(1-(k-1)\beta)} I^{\alpha,\beta,\gamma}_t t^{-(k-1)\beta}=\frac{C_k(-\lambda^\nu t^{-\beta})^k}{\Gamma(1-k\beta)},\ \ k\geq 0.
\end{equation*}
Therefore
\begin{equation}
p^{\alpha,\beta,\gamma}_{\nu}(0,t)=\sum_{k=0}^{\infty}\frac{C_k(-\lambda^\nu t^{-\beta})^k}{\Gamma(1-k\beta)},
\end{equation}
{\it i.e.} the result holds for $n=0$.

For $n=1$, substituting $p^{\alpha,\beta,\gamma}_{\nu}(1,t)=\sum_{k=0}^{\infty}p^{\alpha,\beta,\gamma}_{\nu,k}(1,t)$ in (\ref{kol7}) and applying ADM, we get
\begin{equation*}\label{2.7kk7}
\sum_{k=0}^{\infty}p^{\alpha,\beta,\gamma}_{\nu,k}(1,t)=p^{\alpha,\beta,\gamma}_{\nu}(1,0)-\lambda^\nu\sum_{k=0}^{\infty} I^{\alpha,\beta,\gamma}_t \left(p^{\alpha,\beta,\gamma}_{\nu,k}(1,t)-\nu p^{\alpha,\beta,\gamma}_{\nu,k}(0,t)\right).
\end{equation*}
Thus, $p^{\alpha,\beta,\gamma}_{\nu,0}(1,t)=p^{\alpha,\beta,\gamma}_{\nu}(1,0)=0$ and 
\begin{equation*}
p^{\alpha,\beta,\gamma}_{\nu,k}(1,t)=-\lambda^\nu I^{\alpha,\beta,\gamma}_t  \left(p^{\alpha,\beta,\gamma}_{\nu,k-1}(1,t)-\nu p^{\alpha,\beta,\gamma}_{\nu,k-1}(0,t)\right),\ \ k\geq 1.
\end{equation*}
Hence,
\begin{align*}
p^{\alpha,\beta,\gamma}_{\nu,1}(1,t)&=-\lambda^\nu I^{\alpha,\beta,\gamma}_t  \left(p^{\alpha,\beta,\gamma}_{\nu,0}(1,t)-\nu p^{\alpha,\beta,\gamma}_{\nu,0}(0,t)\right)
=\frac{-\nu C_1(-\lambda^\nu t^{-\beta})}{\Gamma(1-\beta)},\\
p^{\alpha,\beta,\gamma}_{\nu,2}(1,t)&=-\lambda^\nu I^{\alpha,\beta,\gamma}_t  \left(p^{\alpha,\beta,\gamma}_{\nu,1}(1,t)-\nu p^{\alpha,\beta,\gamma}_{\nu,1}(0,t)\right)
=\frac{-2\nu C_2(-\lambda^\nu t^{-\beta})^2}{\Gamma(1-2\beta)},\\
p^{\alpha,\beta,\gamma}_{\nu,3}(1,t)&=-\lambda^\nu I^{\alpha,\beta,\gamma}_t  \left(p^{\alpha,\beta,\gamma}_{\nu,2}(1,t)-\nu p^{\alpha,\beta,\gamma}_{\nu,2}(0,t)\right)
=\frac{-3\nu C_3(-\lambda^\nu t^{-\beta})^3}{\Gamma(1-3\beta)}.
\end{align*}
Let
\begin{equation}
p^{\alpha,\beta,\gamma}_{\nu,k-1}(1,t)=\frac{-(k-1)\nu C_{k-1}(-\lambda^\nu t^{-\beta})^{k-1}}{\Gamma(1-(k-1)\beta)}.
\end{equation}
Then
\begin{equation*}
p^{\alpha,\beta,\gamma}_{\nu,k}(1,t)=-\lambda^\nu I^{\alpha,\beta,\gamma}_t  \left(p^{\alpha,\beta,\gamma}_{\nu,k-1}(1,t)-\nu p^{\alpha,\beta,\gamma}_{\nu,k-1}(0,t)\right)=\frac{-k\nu C_k(-\lambda^\nu t^{-\beta})^k}{\Gamma(1-k\beta)},\ \ k\geq 0.
\end{equation*}
Therefore
\begin{equation}
p^{\alpha,\beta,\gamma}_{\nu}(1,t)=-\sum_{k=0}^{\infty}\frac{k\nu C_k(-\lambda^\nu t^{-\beta})^k}{\Gamma(1-k\beta)},
\end{equation}
{\it i.e.} the result holds for $n=1$.

Now assume for $m>1$ the following:
\begin{equation}
p^{\alpha,\beta,\gamma}_{\nu,k}(m,t)=\frac{(-1)^m}{m!}\frac{(k\nu)_mC_k(-\lambda^\nu t^{-\beta})^k}{\Gamma(1-k\beta)},\ \ k\geq 0.
\end{equation}
{\it i.e.} (\ref{2.4kkr}) holds for $n=m$, where $p^{\alpha,\beta,\gamma}_{\nu}(m,t)=\sum_{k=0}^{\infty}p^{\alpha,\beta,\gamma}_{\nu,k}(m,t)$.

For $n=m+1$, substituting $p^{\alpha,\beta,\gamma}_{\nu}(m+1,t)=\sum_{k=0}^{\infty}p^{\alpha,\beta,\gamma}_{\nu,k}(m+1,t)$ in (\ref{kol7}) and applying ADM, we get
\begin{equation*}\label{2.7kk7}
\sum_{k=0}^{\infty}p^{\alpha,\beta,\gamma}_{\nu,k}(m+1,t)=p^{\alpha,\beta,\gamma}_{\nu}(m+1,0)-\lambda^\nu\sum_{k=0}^{\infty} I^{\alpha,\beta,\gamma}_t \sum_{r=0}^{m+1} (-1)^r\frac{(\nu)_r}{r!}p^{\alpha,\beta,\gamma}_{\nu,k}(m+1-r,t).
\end{equation*}
Thus, $p^{\alpha,\beta,\gamma}_{\nu,0}(m+1,t)=p^{\alpha,\beta,\gamma}_{\nu}(m+1,0)=0$ and
\begin{equation*}
p^{\alpha,\beta,\gamma}_{\nu,k}(m+1,t)=-\lambda^\nu I^{\alpha,\beta,\gamma}_t \sum_{r=0}^{m+1} (-1)^r\frac{(\nu)_r}{r!}p^{\alpha,\beta,\gamma}_{\nu,k-1}(m+1-r,t),\ \ k\geq 1.
\end{equation*}
Hence,
\begin{align*}
p^{\alpha,\beta,\gamma}_{\nu,1}(m+1,t)&=-\lambda^\nu I^{\alpha,\beta,\gamma}_t \sum_{r=0}^{m+1} (-1)^r\frac{(\nu)_r}{r!}p^{\alpha,\beta,\gamma}_{\nu,0}(m+1-r,t)\\
&=\frac{(-1)^{m+1}}{(m+1)!}\frac{(\nu)_{m+1}C_1(-\lambda^\nu t^{-\beta})}{\Gamma(1-\beta)},\\
p^{\alpha,\beta,\gamma}_{\nu,2}(m+1,t)&=-\lambda^\nu I^{\alpha,\beta,\gamma}_t \sum_{r=0}^{m+1} (-1)^r\frac{(\nu)_r}{r!}p^{\alpha,\beta,\gamma}_{\nu,1}(m+1-r,t)\\
&=\frac{\lambda^{2\nu}(-1)^{m+1}C_1}{(m+1)!\Gamma(1-\beta)}I^{\alpha,\beta,\gamma}_tt^{-\beta}\sum_{r=0}^{m+1} \frac{(m+1)!}{r!(m+1-r)!}(\nu)_r(\nu)_{m+1-r}\\
&=\frac{(-1)^{m+1}}{(m+1)!}\frac{(2\nu)_{m+1}C_2(-\lambda^\nu t^{-\beta})^2}{\Gamma(1-2\beta)}.
\end{align*}
Let
\begin{equation*}
p^{\alpha,\beta,\gamma}_{\nu,k-1}(m+1,t)=\frac{(-1)^{m+1}}{(m+1)!}\frac{((k-1)\nu)_{m+1}C_{k-1}(-\lambda^\nu t^{-\beta})^{k-1}}{\Gamma(1-(k-1)\beta)}.
\end{equation*}
Then
\begin{align*}
p^{\alpha,\beta,\gamma}_{\nu,k}(m+1,t)&=-\lambda^\nu I^{\alpha,\beta,\gamma}_t \sum_{r=0}^{m+1} (-1)^r\frac{(\nu)_r}{r!}p^{\alpha,\beta,\gamma}_{\nu,k-1}(m+1-r,t)\\
&=\frac{(-\lambda^\nu)^k(-1)^{m+1}C_{k-1}I^{\alpha,\beta,\gamma}_tt^{-(k-1)\beta}}{(m+1)!\Gamma(1-(k-1)\beta)}\sum_{r=0}^{m+1} \frac{(m+1)!}{r!(m+1-r)!}(\nu)_r((k-1)\nu)_{m+1-r}\\
&=\frac{(-1)^{m+1}}{(m+1)!}\frac{(k\nu)_{m+1}C_k(-\lambda^\nu t^{-\beta})^k}{\Gamma(1-k\beta)},\ \ k\geq 0.
\end{align*}
Therefore
\begin{equation*}
p^{\alpha,\beta,\gamma}_{\nu}(m+1,t)=\frac{(-1)^{m+1}}{(m+1)!}\sum_{k=0}^{\infty‎}\frac{C_k(-\lambda^\nu t^{-\beta})^k}{\Gamma(1-k\beta)}\frac{\Gamma(k\nu+1)}{\Gamma(k\nu-m)},
\end{equation*}
and thus the result holds for $n=m+1$. This completes the proof.
\end{proof}
Next we show that $p^{\alpha,\beta,\gamma}_{\nu}(n,t)$ is indeed a pmf. Note that
\begin{align*}
\sum_{n=0}^{\infty‎}p^{\alpha,\beta,\gamma}_\nu(n,t)&=\sum_{n=0}^{\infty‎}\frac{(-1)^n}{n!}\sum_{k=0}^{\infty‎}\frac{C_k(-\lambda^\nu t^{-\beta})^k}{\Gamma(1-k\beta)}\frac{\Gamma(k\nu+1)}{\Gamma(k\nu+1-n)}\\
&=\sum_{k=0}^{\infty‎}\frac{C_k(-\lambda^\nu t^{-\beta})^k}{\Gamma(1-k\beta)}\sum_{n=0}^{\infty‎}(k\nu)_n\frac{(-1)^n}{n!}\\
&=\sum_{k=0}^{\infty‎}\frac{C_k(-\lambda^\nu t^{-\beta})^k}{\Gamma(1-k\beta)}(1-1)^{k\nu}=1,
\end{align*}
since for $k\nu\geq 0$ the binomial series $\sum_{n=0}^{\infty‎}(k\nu)_n\frac{(-1)^n}{n!}$ converges absolutely and also all the terms except for $k=0$ vanishes. For $\beta=-\alpha$, the pmf $p^{\alpha,\beta,\gamma}_{\nu}(n,t)$ reduces to that of the STFPP.  
\begin{corollary}
Let the random variable $X^{\alpha,\beta,\gamma}_\nu$ be the waiting time of the first Saigo space and time fractional Poisson event. Then
\begin{equation*}
\mathrm{Pr}\{X^{\alpha,\beta,\gamma}_\nu>t\}=\mathrm{Pr}\{N^{\alpha,\beta,\gamma}_\nu(t,\lambda)=0\}=\sum_{k=0}^{\infty‎}\frac{C_k(-\lambda^\nu t^{-\beta})^k}{\Gamma(1-k\beta)},\ \ t\geq 0.
\end{equation*}
The special case $\beta=-\alpha$ corresponds to Mittag-Leffler distribution (\ref{77}) {\it i.e.} the first waiting time of STFPP.
\end{corollary}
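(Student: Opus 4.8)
The statement follows almost immediately from Theorem~\ref{t2kkt}, so the plan has essentially two steps, the first carrying whatever (mild) difficulty there is. First I would identify the two events on the right: for a counting process started at $0$, the waiting time $X^{\alpha,\beta,\gamma}_\nu$ of the first Saigo space--time fractional Poisson event exceeds $t$ exactly when the process has not left state $0$ by time $t$, that is, $\{X^{\alpha,\beta,\gamma}_\nu>t\}=\{N^{\alpha,\beta,\gamma}_\nu(t,\lambda)=0\}$. Taking probabilities gives $\mathrm{Pr}\{X^{\alpha,\beta,\gamma}_\nu>t\}=p^{\alpha,\beta,\gamma}_\nu(0,t)$. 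This event identification is the only genuinely probabilistic input (all else is the analytic content of Theorem~\ref{t2kkt}), and it is the only point where one argues rather than computes; since it is essentially the definition of the first waiting time for such a process, I do not expect a real obstacle.

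Second, I would read off $p^{\alpha,\beta,\gamma}_\nu(0,t)$ from (\ref{2.4kkr}). Putting $n=0$ there, the prefactor $(-1)^n/n!$ equals $1$ and the ratio $\Gamma(k\nu+1)/\Gamma(k\nu+1-n)$ collapses to $\Gamma(k\nu+1)/\Gamma(k\nu+1)=1$ (equivalently, the falling factorial $(k\nu)_0=1$), so that
\[
\mathrm{Pr}\{X^{\alpha,\beta,\gamma}_\nu>t\}=p^{\alpha,\beta,\gamma}_\nu(0,t)=\sum_{k=0}^{\infty}\frac{C_k\,(-\lambda^\nu t^{-\beta})^k}{\Gamma(1-k\beta)},\qquad t\ge0,
\]
which is just the $n=0$ instance of the pmf formula established in Theorem~\ref{t2kkt}, and is the asserted expression.

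Finally, for the special case $\beta=-\alpha$ I would simplify the constants directly: with $\beta=-\alpha$, every factor $\Gamma(1+\gamma-j\beta)/\Gamma(1+\gamma+\alpha-(j-1)\beta)$ in $C_k=\prod_{j=1}^{k}\Gamma(1+\gamma-j\beta)/\Gamma(1+\gamma+\alpha-(j-1)\beta)$ becomes $\Gamma(1+\gamma+j\alpha)/\Gamma(1+\gamma+j\alpha)=1$, so $C_k\equiv1$; moreover $t^{-\beta}=t^\alpha$ and $\Gamma(1-k\beta)=\Gamma(k\alpha+1)$. Hence the series reduces to $\sum_{k\ge0}(-\lambda^\nu t^\alpha)^k/\Gamma(k\alpha+1)=E_\alpha(-\lambda^\nu t^\alpha)$, which is precisely the Mittag-Leffler expression (\ref{77}) for the first waiting time of the STFPP; this recovers the known special case and completes the proof.
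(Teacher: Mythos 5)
Your proposal is correct and matches the paper's (implicit) argument: the corollary is stated as an immediate consequence of Theorem \ref{t2kkt}, obtained exactly as you do by identifying $\{X^{\alpha,\beta,\gamma}_\nu>t\}$ with $\{N^{\alpha,\beta,\gamma}_\nu(t,\lambda)=0\}$ and setting $n=0$ in (\ref{2.4kkr}). Your verification that $C_k\equiv 1$ when $\beta=-\alpha$, recovering $E_\alpha(-\lambda^\nu t^\alpha)$, is also exactly the intended specialization.
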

\begin{theorem}\label{t2kekt}
The probability generating function $G^{\alpha,\beta,\gamma}_\nu(u,t)=\mathbb{E}(u^{N^{\alpha,\beta,\gamma}_\nu(t,\lambda)})$, of the SSTFPP is
\begin{equation}\label{2.4rker}
G^{\alpha,\beta,\gamma}_\nu(u,t)=\sum_{k=0}^{\infty‎}\frac{C_k(-\lambda^\nu (1-u)^\nu t^{-\beta})^k}{\Gamma(1-k\beta)},\ \ |u|<1.
\end{equation}
\end{theorem}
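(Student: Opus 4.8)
The plan is to compute the generating function directly from its definition $G^{\alpha,\beta,\gamma}_\nu(u,t)=\sum_{n=0}^{\infty}u^n\,p^{\alpha,\beta,\gamma}_\nu(n,t)$ by substituting the pmf formula (\ref{2.4kkr}) established in Theorem \ref{t2kkt}. Since $\Gamma(k\nu+1)/\Gamma(k\nu+1-n)=(k\nu)_n$ is the falling factorial, this gives the double series
\[
G^{\alpha,\beta,\gamma}_\nu(u,t)=\sum_{n=0}^{\infty}\frac{(-u)^n}{n!}\sum_{k=0}^{\infty}\frac{C_k(-\lambda^\nu t^{-\beta})^k}{\Gamma(1-k\beta)}(k\nu)_n .
\]
The first step is to interchange the order of the two sums, which is legitimate once absolute convergence of the double series is checked for $|u|<1$ (more on this below). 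After the interchange, for each fixed $k\ge 0$ the inner sum over $n$ is the generalized binomial series $\sum_{n=0}^\infty\binom{k\nu}{n}(-u)^n=(1-u)^{k\nu}$, which converges for $|u|<1$ because $k\nu\ge 0$. Absorbing the factor $(1-u)^{k\nu}$ into the $k$-th power then produces precisely (\ref{2.4rker}).

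An equivalent and arguably cleaner route is to derive a fractional equation for $G^{\alpha,\beta,\gamma}_\nu(u,t)$ and solve it by ADM, paralleling the proof of Theorem \ref{t2kkt}. Multiplying (\ref{rnewq}) by $u^n$ and summing over $n\ge 0$, the convolution on the right factorizes because $\sum_{r\ge 0}(-1)^r\frac{(\nu)_r}{r!}u^r=(1-u)^\nu$; hence $\partial_t^{\alpha,\beta,\gamma}G^{\alpha,\beta,\gamma}_\nu(u,t)=-\lambda^\nu(1-u)^\nu G^{\alpha,\beta,\gamma}_\nu(u,t)$ with $G^{\alpha,\beta,\gamma}_\nu(u,0)=1$. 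Applying $I_t^{\alpha,\beta,\gamma}$ and using (\ref{2.478}) turns this into the linear functional equation $G^{\alpha,\beta,\gamma}_\nu(u,t)=1-\lambda^\nu(1-u)^\nu I_t^{\alpha,\beta,\gamma}G^{\alpha,\beta,\gamma}_\nu(u,t)$, which has exactly the form of the $n=0$ equation in the proof of Theorem \ref{t2kkt} with $\lambda^\nu$ replaced by $\lambda^\nu(1-u)^\nu$. Running ADM (set $G=\sum_k G_k$, $G_0=1$, $G_k=-\lambda^\nu(1-u)^\nu I_t^{\alpha,\beta,\gamma}G_{k-1}$) and applying Lemma \ref{lhg} repeatedly gives $G_k=C_k(-\lambda^\nu(1-u)^\nu t^{-\beta})^k/\Gamma(1-k\beta)$ by the same induction, and summing over $k$ yields (\ref{2.4rker}).

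The main obstacle is the convergence bookkeeping: in the first route one must justify the interchange of the $n$- and $k$-sums, and in the second one must justify that $\sum_k G_k$ converges. Both reduce to the same estimate --- for fixed $k$ the series $\sum_n |(k\nu)_n|\,|u|^n/n!$ converges for $|u|<1$ (the terms $|\binom{k\nu}{n}|$ are eventually monotone and decay polynomially in $n$), with a bound that grows at most like a constant power in $k$, while the $k$-series converges for every $t>0$ because $1/\Gamma(1-k\beta)$ decays super-exponentially in $k$ (as $\beta<0$) and dominates the growth of $C_k(\lambda^\nu t^{-\beta})^k$ --- this being the very series already shown to be well behaved when verifying that $p^{\alpha,\beta,\gamma}_\nu(\cdot,t)$ is a pmf. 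Finally, putting $\beta=-\alpha$ recovers $G^\alpha_\nu(u,t)=E_\alpha(-\lambda^\nu(1-u)^\nu t^\alpha)$, the known generating function of the STFPP, which serves as a consistency check.
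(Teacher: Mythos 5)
Your first route is exactly the paper's proof: substitute the pmf from Theorem \ref{t2kkt} into the definition of the generating function, interchange the two sums, and apply the generalized binomial theorem $\sum_{n\ge 0}\binom{k\nu}{n}(-u)^n=(1-u)^{k\nu}$ to absorb the factor into the $k$-th power. Your convergence bookkeeping and the alternative ADM derivation via the Cauchy problem are correct additions, but the core argument coincides with the paper's.
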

\begin{proof}
We have
\begin{align*}
G^{\alpha,\beta,\gamma}_\nu(u,t)&=\sum_{n=0}^\infty u^np^{\alpha,\beta,\gamma}_\nu(n,t)\\
&=\sum_{k=0}^{\infty‎}\frac{C_k(-\lambda^\nu t^{-\beta})^k}{\Gamma(1-k\beta)}\sum_{n=0}^\infty\frac{\Gamma(k\nu+1)}{\Gamma(k\nu+1-n)}\frac{(-u)^n}{n!},
\end{align*}
and thus the proof follows on using the generalized binomial theorem.
\end{proof}
\begin{corollary}
The probability generating function of the SSTFPP satisfies the following Cauchy Problem:
\begin{align*}
\partial_{t}^{\alpha,\beta,\gamma}G^{\alpha,\beta,\gamma}_\nu(u,t)&=-\lambda^\nu G^{\alpha,\beta,\gamma}_\nu(u,t)(1-u)^\nu,\ \ |u|<1,\\
G^{\alpha,\beta,\gamma}_\nu(u,0)&=1.
\end{align*}
\end{corollary}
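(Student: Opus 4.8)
The plan is to read both assertions directly off the explicit series for the probability generating function established in Theorem~\ref{t2kekt},
\[
G^{\alpha,\beta,\gamma}_\nu(u,t)=\sum_{k=0}^{\infty}\frac{C_k\,(-\lambda^\nu(1-u)^\nu)^k\,t^{-k\beta}}{\Gamma(1-k\beta)},\qquad |u|<1 .
\]
The initial condition is immediate: since $\beta<0$ we have $-k\beta>0$, so every $k\ge 1$ term carries the vanishing factor $t^{-k\beta}\to 0$ as $t\to 0^+$, while the $k=0$ term equals $C_0/\Gamma(1)=1$ (empty product); hence $G^{\alpha,\beta,\gamma}_\nu(u,0)=1$. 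Equivalently one reads $G^{\alpha,\beta,\gamma}_\nu(u,0)=\sum_{n\ge 0}u^np^{\alpha,\beta,\gamma}_\nu(n,0)=1$ off the initial data of the pmf.

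For the fractional equation I would apply $\partial_t^{\alpha,\beta,\gamma}$ to the series term by term. The only computational input is the action of $\partial_t^{\alpha,\beta,\gamma}$ on a power. For $0<\alpha\le 1$ (so $m=1$ in (\ref{ghj})),
\[
\partial_t^{\alpha,\beta,\gamma}t^{\rho-1}=I_t^{-\alpha+1,-\beta-1,\alpha+\gamma}\frac{\mathrm{d}}{\mathrm{d}t}t^{\rho-1}=(\rho-1)\,I_t^{-\alpha+1,-\beta-1,\alpha+\gamma}t^{\rho-2},
\]
and Lemma~\ref{lhg} turns this into
\[
\partial_t^{\alpha,\beta,\gamma}t^{\rho-1}=\frac{\Gamma(\rho)\,\Gamma(\rho+\alpha+\beta+\gamma)}{\Gamma(\rho+\beta)\,\Gamma(\rho+\gamma)}\,t^{\rho+\beta-1}.
\]
Setting $\rho=1-k\beta$ and using $\Gamma(1+\gamma+\alpha-(k-1)\beta)/\Gamma(1+\gamma-k\beta)=C_{k-1}/C_k$, which is exactly the defining ratio of the $C_k$, this collapses to
\[
\partial_t^{\alpha,\beta,\gamma}\!\left(\frac{C_k\,t^{-k\beta}}{\Gamma(1-k\beta)}\right)=\frac{C_{k-1}\,t^{-(k-1)\beta}}{\Gamma(1-(k-1)\beta)},\qquad k\ge 1,
\]
the $k=0$ term being constant and therefore annihilated. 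Multiplying by $(-\lambda^\nu(1-u)^\nu)^k$, summing over $k$, and shifting the summation index down by one reproduces the series for $G^{\alpha,\beta,\gamma}_\nu(u,t)$ multiplied by $-\lambda^\nu(1-u)^\nu$, giving $\partial_t^{\alpha,\beta,\gamma}G^{\alpha,\beta,\gamma}_\nu(u,t)=-\lambda^\nu(1-u)^\nu\,G^{\alpha,\beta,\gamma}_\nu(u,t)$.

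The one point that genuinely needs care is the exchange of the operator $\partial_t^{\alpha,\beta,\gamma}$ --- an integral operator post-composed with $\mathrm{d}/\mathrm{d}t$ --- with the infinite sum. This is settled by noting that for $|u|<1$ the series and its termwise $t$-derivative converge uniformly on compact subsets of $(0,\infty)$ (it is a Mittag--Leffler type series, reducing to $E_\alpha(-\lambda^\nu(1-u)^\nu t^\alpha)$ when $\beta=-\alpha$), so the Saigo integral in (\ref{1.8}) may be passed through term by term. An alternative, equally short route bypasses the explicit series: multiply the governing equations (\ref{rnew}) by $u^n$, sum over $n\ge 0$, push $\partial_t^{\alpha,\beta,\gamma}$ through the sum by the same convergence remark, and recognise $\sum_{n\ge 0}u^n(1-B)^\nu p^{\alpha,\beta,\gamma}_\nu(n,t)=(1-u)^\nu G^{\alpha,\beta,\gamma}_\nu(u,t)$ from $(1-B)^\nu=\sum_{r\ge 0}\frac{(\nu)_r}{r!}(-1)^rB^r$; the initial condition is then again obtained from the initial data on the pmf.
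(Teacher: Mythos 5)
Your proposal is correct. The paper states this corollary without any proof, so there is nothing to compare against line by line; it is evidently meant to follow either from the explicit series in Theorem~\ref{t2kekt} or from summing the governing equations (\ref{rnewq}) against $u^n$, and you supply both routes. Your key computation checks out: with $m=1$ in (\ref{ghj}) and Lemma~\ref{lhg} applied to $I_t^{1-\alpha,-\beta-1,\alpha+\gamma}t^{\rho-2}$, the factor $(\rho-1)\Gamma(\rho-1)=\Gamma(\rho)$ gives exactly $\partial_t^{\alpha,\beta,\gamma}t^{\rho-1}=\frac{\Gamma(\rho)\Gamma(\rho+\alpha+\beta+\gamma)}{\Gamma(\rho+\beta)\Gamma(\rho+\gamma)}t^{\rho+\beta-1}$, and with $\rho=1-k\beta$ the ratio $C_k\Gamma(1+\gamma+\alpha-(k-1)\beta)/\Gamma(1+\gamma-k\beta)=C_{k-1}$ produces the index shift that yields the factor $-\lambda^\nu(1-u)^\nu$; the $k=0$ term is constant and is annihilated, and the initial condition follows from $\beta<0$ (or directly from the pmf initial data). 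The only caveat, which the paper itself glosses over in the proof of Theorem~\ref{t2kkt} as well, is that Lemma~\ref{lhg} carries the restriction $\rho>\beta-\gamma$, so your power-rule step implicitly assumes $1+\alpha+\gamma-(k-1)\beta>0$ for all $k\ge 1$; this and the term-by-term interchange are handled at the same level of rigour as the rest of the paper.
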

\begin{corollary}
The probability generating functions of STFPP ($\beta=-\alpha$), SFPP ($\beta=-\alpha=-1$), and TFPP ($\beta=-\alpha$, $\nu=1$) are
\begin{align*}
G^{\alpha}_\nu(u,t)&=E_{\alpha}(-\lambda^\nu(1-u)^\nu t^\alpha),\ \ |u|<1,\\
G_\nu(u,t)&=e^{-\lambda^\nu(1-u)^\nu t},\ \ |u|<1,\\
G^{\alpha}(u,t)&=E_{\alpha}(-\lambda(1-u)t^\alpha),\ \ |u|<1,
\end{align*}
respectively.
\end{corollary}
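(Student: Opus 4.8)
The plan is to obtain all three generating functions directly from Theorem \ref{t2kekt} by specializing the parameters; no new computation involving the process itself is needed. The one fact that must be checked is how the constant $C_k$ behaves when $\beta=-\alpha$.

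First I would observe that for $\beta=-\alpha$ every factor of the product defining $C_k$ equals $1$: the $j$-th factor is $\Gamma(1+\gamma+j\alpha)/\Gamma(1+\gamma+\alpha+(j-1)\alpha)=\Gamma(1+\gamma+j\alpha)/\Gamma(1+\gamma+j\alpha)=1$, so $C_k=1$ for all $k\ge0$, with no dependence on $\gamma$. Plugging $\beta=-\alpha$ and $C_k=1$ into (\ref{2.4rker}), and using $t^{-\beta}=t^\alpha$ and $\Gamma(1-k\beta)=\Gamma(k\alpha+1)$, gives
\[
G^{\alpha,-\alpha,\gamma}_\nu(u,t)=\sum_{k=0}^{\infty}\frac{\bigl(-\lambda^\nu(1-u)^\nu t^\alpha\bigr)^k}{\Gamma(k\alpha+1)}=E_\alpha\bigl(-\lambda^\nu(1-u)^\nu t^\alpha\bigr),
\]
which is the STFPP formula; the vanishing of the $\gamma$-dependence is consistent with the reduction of the SSTFPP to the STFPP when $\beta=-\alpha$.

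For the SFPP case I would additionally take $\alpha=1$, i.e.\ $\beta=-\alpha=-1$; then $E_1(x)=\sum_{k\ge0}x^k/\Gamma(k+1)=\sum_{k\ge0}x^k/k!=e^x$, so $G_\nu(u,t)=e^{-\lambda^\nu(1-u)^\nu t}$. For the TFPP case I would instead keep $\beta=-\alpha$ and set $\nu=1$, so that $(1-u)^\nu=1-u$ and the STFPP expression collapses to $E_\alpha\bigl(-\lambda(1-u)t^\alpha\bigr)$. Convergence on $|u|<1$ is inherited from Theorem \ref{t2kekt} (and is anyway automatic since the Mittag-Leffler series is entire), so there is no genuine obstacle; the only step that needs a line of justification is the identity $C_k=1$ for $\beta=-\alpha$.
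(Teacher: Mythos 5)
Your proposal is correct and is exactly the argument the paper intends: the corollary is stated without proof as an immediate specialization of Theorem \ref{t2kekt}, and the only nontrivial step is the observation that for $\beta=-\alpha$ each factor $\Gamma(1+\gamma+j\alpha)/\Gamma(1+\gamma+\alpha+(j-1)\alpha)$ of $C_k$ equals $1$, which you verify correctly. The subsequent reductions via $t^{-\beta}=t^{\alpha}$, $\Gamma(1-k\beta)=\Gamma(k\alpha+1)$, $E_1(x)=e^x$, and $\nu=1$ are all as the paper implies.
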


\section{Concluding remarks}
The state probabilities of various fractional generalizations of the classical homogeneous Poisson process are obtained by several authors by evaluating probability generating function using Laplace transform. Sometimes Laplace transform of certain fractional derivatives may not be known or may have complicated forms. In this paper, we have shown that ADM can be effectively used to obtain these state probabilities. As an illustration, we have obtained the distribution of STFPP using ADM. We have also improved a result of {\small{\sc Rao}} {\it et al.} (2010) by introducing the correct version of the Caputo type Saigo fractional derivative. We used Caputo type Saigo fractional derivative to generalize STFPP to SSTFPP and the state probabilities of SSTFPP are obtained using ADM.

\newpage
\section*{Supplementary}
Here we have illustrated the use of ADM to obtain the state probabilities of TFPP and SFPP.
\subsection*{Time fractional Poisson process}
Consider the following difference-differential equations governing the state probabilities of the TFPP:
\begin{equation}\label{r2new}
\partial_t^\alpha p^\alpha(n,t)=-\lambda(p^\alpha(n,t)-p^\alpha(n-1,t)),\ \ 0<\alpha\leq 1,\ n\geq 0,
\end{equation}
with $p^\alpha(0,0)=1$ and $p^\alpha(n,0)=0$, $n\geq 1$. The solution of (\ref{r2new}) is given by
\begin{equation}\label{2.4kky}
p^\alpha(n,t)=\frac{(\lambda t^\alpha)^n}{n!}\sum_{k=0}^{\infty‎}\frac{(k+n)!}{k!}\frac{(-\lambda t^\alpha)^k}{\Gamma\left((k+n)\alpha+1\right)},\ \ n\geq0.
\end{equation}

\begin{proof}
Applying RL integral $I^\alpha_t$ on both sides of (\ref{r2new}), we get
\begin{equation}\label{yth}
p^\alpha(n,t)=p^\alpha(n,0)-\lambda I_t^\alpha(p^\alpha(n,t)-p^\alpha(n-1,t)),\ \ n\geq 0.
\end{equation}

Note that $p^\alpha(-1,t)=0$ for $t\geq0$. For $n=0$, Substitute $p^\alpha(0,t)=\sum_{k=0}^{\infty}p^\alpha_{k}(0,t)$ in (\ref{yth}) and apply ADM to get
\begin{equation*}\label{2.7kk}
\sum_{k=0}^{\infty}p^\alpha_{k}(0,t)=p^\alpha(0,0)-\lambda\sum_{k=0}^{\infty} I_t^\alpha p^\alpha_{k}(0,t).
\end{equation*}
Thus, $p^\alpha_{0}(0,t)=p^\alpha(0,0)=1$ and $p^\alpha_{k}(0,t)=-\lambda I_t^\alpha p^\alpha_{k-1}(0,t)$, $k\geq 1$. Hence,
\begin{equation*}
p^\alpha_{1}(0,t)=-\lambda I_t^\alpha p^\alpha_{0}(0,t)=-\lambda I_t^\alpha t^0=\frac{-\lambda t^\alpha}{\Gamma(\alpha+1)},
\end{equation*}
and similarly 
\begin{equation*}
p^\alpha_{2}(0,t)=\frac{(-\lambda t^{\alpha})^2}{\Gamma(2\alpha+1)},\ \ p^\alpha_{3}(0,t)=\frac{(-\lambda t^{\alpha})^3}{\Gamma(3\alpha+1)},\ldots.
\end{equation*}
Let
\begin{equation}
p^\alpha_{k-1}(0,t)=\frac{(-\lambda t^{\alpha})^{k-1}}{\Gamma((k-1)\alpha+1)}.
\end{equation}
Then
\begin{equation*}
p^\alpha_{k}(0,t)=-\lambda I_t^\alpha p^\alpha_{k-1}(0,t)=\frac{(-\lambda)^{k}}{\Gamma((k-1)\alpha+1)} I_t^\alpha t^{(k-1)\alpha}=\frac{(-\lambda t^{\alpha})^{k}}{\Gamma(k\alpha+1)},\ \ k\geq 0.
\end{equation*}
Therefore
\begin{equation}
p^\alpha(0,t)=\sum_{k=0}^{\infty}\frac{(-\lambda t^{\alpha})^{k}}{\Gamma(k\alpha+1)},
\end{equation}
and thus the result holds for $n=0$.
 
For $n=1$, substituting $p^\alpha(1,t)=\sum_{k=0}^{\infty}p^\alpha_{k}(1,t)$ in (\ref{yth}) and applying ADM, we get
\begin{equation*}\label{2.7kk}
\sum_{k=0}^{\infty}p^\alpha_{k}(1,t)=p^\alpha(1,0)-\lambda\sum_{k=0}^{\infty} I_t^\alpha \left(p^\alpha_{k}(1,t)-p^\alpha_{k}(0,t)\right).
\end{equation*}
Thus, $p^\alpha_{0}(1,t)=p^\alpha(1,0)=0$ and $p^\alpha_{k}(1,t)=-\lambda I_t^\alpha \left(p^\alpha_{k-1}(1,t)-p^\alpha_{k-1}(0,t)\right)$, $k\geq 1$.\\
Hence,
\begin{align*}
p^\alpha_{1}(1,t)&=-\lambda I_t^\alpha \left(p^\alpha_{0}(1,t)-p^\alpha_{0}(0,t)\right)=\lambda I_t^\alpha t^0=\frac{-(-\lambda t^\alpha)}{\Gamma(\alpha+1)},\\
p^\alpha_{2}(1,t)&=-\lambda I_t^\alpha \left(p^\alpha_{1}(1,t)-p^\alpha_{1}(0,t)\right)=\frac{-2\lambda^2}{\Gamma(\alpha+1)} I_t^\alpha t^\alpha=\frac{-2(-\lambda t^{\alpha})^2}{\Gamma(2\alpha+1)},\\
p^\alpha_{3}(1,t)&=-\lambda I_t^\alpha \left(p^\alpha_{2}(1,t)-p^\alpha_{2}(0,t)\right)=\frac{3\lambda^3}{\Gamma(2\alpha+1)} I_t^\alpha t^{2\alpha}=\frac{-3(-\lambda t^{\alpha})^3}{\Gamma(3\alpha+1)}.
\end{align*}
Let
\begin{equation}
p^\alpha_{k-1}(1,t)=\frac{-(k-1)(-\lambda t^{\alpha})^{k-1}}{\Gamma((k-1)\alpha+1)}.
\end{equation}
Then
\begin{align*}
p^\alpha_{k}(1,t)=-\lambda I_t^\alpha \left(p^\alpha_{k-1}(1,t)-p^\alpha_{k-1}(0,t)\right)&=\frac{(-1)^{k+1}k\lambda^{k}}{\Gamma((k-1)\alpha+1)} I_t^\alpha t^{(k-1)\alpha}\\&=\frac{-k(-\lambda t^{\alpha})^{k}}{\Gamma(k\alpha+1)},\ \ k\geq 1.
\end{align*}
Therefore
\begin{equation}
p^\alpha(1,t)=-\sum_{k=1}^{\infty}\frac{k(-\lambda t^{\alpha})^{k}}{\Gamma(k\alpha+1)}=\lambda t^{\alpha}\sum_{k=0}^{\infty}\frac{(k+1)(-\lambda t^{\alpha})^{k}}{\Gamma((k+1)\alpha+1)},
\end{equation}
and thus the result holds for $n=1$.

For $n=2$, substituting $p^\alpha(2,t)=\sum_{k=0}^{\infty}p^\alpha_{k}(2,t)$ in (\ref{yth}) and applying ADM, we get
\begin{equation*}\label{2.7kk}
\sum_{k=0}^{\infty}p^\alpha_{k}(2,t)=p^\alpha(2,0)-\lambda\sum_{k=0}^{\infty} I_t^\alpha \left(p^\alpha_{k}(2,t)-p^\alpha_{k}(1,t)\right).
\end{equation*}
Thus, $p^\alpha_{0}(2,t)=p^\alpha(2,0)=0$ and $p^\alpha_{k}(2,t)=-\lambda I_t^\alpha \left(p^\alpha_{k-1}(2,t)-p^\alpha_{k-1}(1,t)\right)$, $k\geq 1$.\\
Hence,
\begin{align*}
p^\alpha_{1}(2,t)&=-\lambda I_t^\alpha \left(p^\alpha_{0}(2,t)-p^\alpha_{0}(1,t)\right)=0,\\
p^\alpha_{2}(2,t)&=-\lambda I_t^\alpha \left(p^\alpha_{1}(2,t)-p^\alpha_{1}(1,t)\right)=\frac{\lambda^2}{\Gamma(\alpha+1)} I_t^\alpha t^\alpha=\frac{2.1(-\lambda t^{\alpha})^2}{2\Gamma(2\alpha+1)},\\
p^\alpha_{3}(2,t)&=-\lambda I_t^\alpha \left(p^\alpha_{2}(2,t)-p^\alpha_{2}(1,t)\right)=\frac{-3\lambda^3}{\Gamma(2\alpha+1)} I_t^\alpha t^{2\alpha}=\frac{3.2(-\lambda t^{\alpha})^3}{2\Gamma(3\alpha+1)}.
\end{align*}
Let
\begin{equation}
p^\alpha_{k-1}(2,t)=\frac{(k-1)(k-2)(-\lambda t^{\alpha})^{k-1}}{2\Gamma((k-1)\alpha+1)}.
\end{equation}
Then
\begin{align*}
p^\alpha_{k}(2,t)&=-\lambda I_t^\alpha \left(p^\alpha_{k-1}(2,t)-p^\alpha_{k-1}(1,t)\right)\\
&=\frac{(-1)^{k}k(k-1)\lambda^{k}}{2\Gamma((k-1)\alpha+1)} I_t^\alpha t^{(k-1)\alpha}=\frac{k(k-1)(-\lambda t^{\alpha})^{k}}{2\Gamma(k\alpha+1)},\ \ k\geq 2.
\end{align*}
Therefore
\begin{equation}
p^\alpha(2,t)=\sum_{k=2}^{\infty}\frac{k(k-1)(-\lambda t^{\alpha})^{k}}{2\Gamma(k\alpha+1)}=\frac{(\lambda t^{\alpha})^2}{2}\sum_{k=0}^{\infty}\frac{(k+2)(k+1)(-\lambda t^{\alpha})^{k}}{\Gamma((k+2)\alpha+1)},
\end{equation}
and thus the result holds for $n=2$.

Let $p^\alpha(m,t)=\sum_{k=0}^{\infty}p^\alpha_{k}(m,t)$ in (\ref{yth}) and assume the result holds for $n=m>2$ {\it i.e.} $p^\alpha_{k}(m,t)=0$, $k<m$ and
\begin{equation*}
p^\alpha_{k}(m,t)=\frac{(-1)^{m}k!(-\lambda t^{\alpha})^{k}}{m!(k-m)!\Gamma(k\alpha+1)},\ \ k\geq m.
\end{equation*}
For $n=m+1$, substituting $p^\alpha(m+1,t)=\sum_{k=0}^{\infty}p^\alpha_{k}(m+1,t)$ in (\ref{yth}) and applying ADM, we get
\begin{equation*}
\sum_{k=0}^{\infty}p^\alpha_{k}(m+1,t)=p^\alpha(m+1,0)-\lambda\sum_{k=0}^{\infty} I_t^\alpha \left(p^\alpha_{k}(m+1,t)-p^\alpha_{k}(m,t)\right).
\end{equation*}
Thus, $p^\alpha_{0}(m+1,t)=p^\alpha(m+1,0)=0$ and $p^\alpha_{k}(m+1,t)=-\lambda I_t^\alpha \left(p^\alpha_{k-1}(m+1,t)-p^\alpha_{k-1}(m,t)\right)$, $k\geq 1$. Hence,
\begin{align*}
p^\alpha_{1}(m+1,t)&=-\lambda I_t^\alpha \left(p^\alpha_{0}(m+1,t)-p^\alpha_{0}(m,t)\right)=0,\\
p^\alpha_{2}(m+1,t)&=-\lambda I_t^\alpha \left(p^\alpha_{1}(m+1,t)-p^\alpha_{1}(m,t)\right)=0.
\end{align*}
Let
\begin{equation*}
p^\alpha_{k-1}(m+1,t)=0,\ \ k-1<m+1.
\end{equation*}
Then
\begin{equation*}
p^\alpha_{k}(m+1,t)=-\lambda I_t^\alpha \left(p^\alpha_{k-1}(m+1,t)-p^\alpha_{k-1}(m,t)\right)=0,\ \ k<m+1.
\end{equation*}
Now for $k\geq m+1$, we have
\begin{align*}
p^\alpha_{m+1}(m+1,t)&=-\lambda I_t^\alpha \left(p^\alpha_{m}(m+1,t)-p^\alpha_{m}(m,t)\right)=\frac{\lambda^{m+1}}{\Gamma(m\alpha+1)} I_t^\alpha t^{m\alpha}=\frac{(\lambda t^{\alpha})^{m+1}}{\Gamma((m+1)\alpha+1)},\\
p^\alpha_{m+2}(m+1,t)&=-\lambda I_t^\alpha \left(p^\alpha_{m+1}(m+1,t)-p^\alpha_{m+1}(m,t)\right)\\
&=\frac{-(m+2)\lambda^{m+2}}{\Gamma((m+1)\alpha+1)} I_t^\alpha t^{(m+1)\alpha}=\frac{-(m+2)(\lambda t^{\alpha})^{m+2}}{\Gamma((m+2)\alpha+1)}.
\end{align*}
Let
\begin{equation*}
p^\alpha_{k-1}(m+1,t)=\frac{(-1)^{m+1}(k-1)!(-\lambda t^{\alpha})^{k-1}}{(m+1)!(k-m-2)!\Gamma((k-1)\alpha+1)},\ \ k-1\geq m+1.
\end{equation*}
Then
\begin{align*}
p^\alpha_{k}(m+1,t)&=-\lambda I_t^\alpha \left(p^\alpha_{k-1}(m+1,t)-p^\alpha_{k-1}(m,t)\right)\\
&=\frac{(-1)^{k+m+1}k!\lambda^{k}}{(m+1)!(k-m-1)!\Gamma((k-1)\alpha+1)} I_t^\alpha t^{(k-1)\alpha}\\
&=\frac{(-1)^{k+m+1}k!(\lambda t^{\alpha})^{k}}{(m+1)!(k-m-1)!\Gamma(k\alpha+1)},\ \ k\geq m+1.
\end{align*}
Therefore
\begin{align*}
p^\alpha(m+1,t)&=\sum_{k=m+1}^{\infty}\frac{(-1)^{k+m+1}k!(\lambda t^{\alpha})^{k}}{(m+1)!(k-m-1)!\Gamma(k\alpha+1)}\\
&=\frac{(\lambda t^{\alpha})^{m+1}}{(m+1)!}\sum_{k=0}^{\infty}\frac{(k+m)!(-\lambda t^{\alpha})^{k}}{k!\Gamma((k+m+1)\alpha+1)},
\end{align*}
and thus the result holds for $n=m+1$. This completes the proof.
\end{proof}
\subsection*{Space fractional Poisson process}
Consider the following difference-differential equations governing the state probabilities of the SFPP:
\begin{equation}\label{hj45}
\frac{\mathrm{d}}{\mathrm{d}t} p_\nu(n,t)=-\lambda^\nu\sum_{r=0}^{n}(-1)^r\frac{(\nu)_r}{r!} p_\nu(n-r,t),\ \ 0<\nu\leq 1,\ n\geq 0,
\end{equation}
with $p_\nu(0,0)=1$ and $p_\nu(n,0)=0$, $n\geq 1$. The solution of (\ref{hj45}) is given by
\begin{equation}\label{2.4kk}
p_\nu(n,t)=\frac{(-1)^n}{n!}\sum_{k=0}^{\infty‎}\frac{(-\lambda^\nu t)^k}{k!}\frac{\Gamma(k\nu+1)}{\Gamma(k\nu+1-n)},\ \ n\geq0.
\end{equation}
\begin{proof}
The difference-differential equations (\ref{hj45}) can be equivalently written as
\begin{equation}\label{rgft}
p_\nu(n,t)=p_\nu(n,0)-\lambda^\nu\int_0^t \sum_{r=0}^n (-1)^r\frac{(\nu)_r}{r!}p_\nu(n-r,s)\,\mathrm{d}s,\ \ n\geq 0.
\end{equation}

For $n=0$, substituting $p_\nu(0,t)=\sum_{k=0}^{\infty}p_{\nu,k}(0,t)$ in (\ref{rgft}) and applying ADM, we get
\begin{equation*}\label{2.7kk}
\sum_{k=0}^{\infty}p_{\nu,k}(0,t)=p_\nu(0,0)-\lambda^\nu\sum_{k=0}^{\infty} \int_0^t  p_{\nu,k}(0,s)\,\mathrm{d}s.
\end{equation*}
Thus, $p_{\nu,0}(0,t)=p_\nu(0,0)=1$ and $p_{\nu,k}(0,t)=-\lambda^\nu \int_0^t p_{\nu,k-1}(0,s)\,\mathrm{d}s$, $k\geq 1$.\\
Hence,
\begin{align*}
p_{\nu,1}(0,t)&=-\lambda^\nu \int_0^t p_{\nu,0}(0,s)\,\mathrm{d}s
=-\lambda^\nu \int_0^t\,\mathrm{d}s
=-\lambda^\nu t,\\
p_{\nu,2}(0,t)&=-\lambda^\nu \int_0^t p_{\nu,1}(0,s)\,\mathrm{d}s
=-\lambda^\nu \int_0^t (-\lambda^\nu s)\,\mathrm{d}s
=\frac{(-\lambda^{\nu}t)^2}{2!},\\
p_{\nu,3}(0,t)&=-\lambda^\nu \int_0^t p_{\nu,2}(0,s)\,\mathrm{d}s
=-\lambda^\nu \int_0^t \frac{(\lambda^{\nu}s)^2}{2!}\,\mathrm{d}s
=\frac{(-\lambda^{\nu}t)^3}{3!},\\
&\ \vdots\\
p_{\nu,k}(0,t)&=-\lambda^\nu \int_0^t p_{\nu,k-1}(0,s)\,\mathrm{d}s
=-\lambda^\nu \int_0^t \frac{(-\lambda^{\nu}s)^{k-1}}{(k-1)!}\,\mathrm{d}s
=\frac{(-\lambda^{\nu}t)^k}{k!},\ \ k\geq 1.
\end{align*}
Therefore
\begin{equation}
p_{\nu}(0,t)=\sum_{k=0}^{\infty}\frac{(-\lambda^{\nu}t)^k}{k!}.
\end{equation}

For $n=1$, substituting $p_\nu(1,t)=\sum_{k=0}^{\infty}p_{\nu,k}(1,t)$ in (\ref{rgft}) and applying ADM, we get
\begin{equation*}\label{2.7kk}
\sum_{k=0}^{\infty}p_{\nu,k}(1,t)=p_\nu(1,0)-\lambda^\nu\sum_{k=0}^{\infty} \int_0^t \left(p_{\nu,k}(1,s)-\nu p_{\nu,k}(0,s)\right)\,\mathrm{d}s.
\end{equation*}
Thus, $p_{\nu,0}(1,t)=p_\nu(1,0)=0$ and $p_{\nu,k}(1,t)=-\lambda^\nu \int_0^t  \left(p_{\nu,k-1}(1,s)-\nu p_{\nu,k-1}(0,s)\right)\,\mathrm{d}s$, $k\geq 1$. Hence,
\begin{align*}
p_{\nu,1}(1,t)&=-\lambda^\nu \int_0^t  \left(p_{\nu,0}(1,s)-\nu p_{\nu,0}(0,s)\right)\,\mathrm{d}s=-\nu(-\lambda^\nu t),\\
p_{\nu,2}(1,t)&=-\lambda^\nu \int_0^t  \left(p_{\nu,1}(1,s)-\nu p_{\nu,1}(0,s)\right)\,\mathrm{d}s=-\nu(-\lambda^\nu t)^2,\\
p_{\nu,3}(1,t)&=-\lambda^\nu \int_0^t  \left(p_{\nu,2}(1,s)-\nu p_{\nu,2}(0,s)\right)\,\mathrm{d}s=\frac{-\nu(-\lambda^\nu t)^3}{2!},\\
&\ \vdots\\
p_{\nu,k}(1,t)&=-\lambda^\nu \int_0^t  \left(p_{\nu,k-1}(1,s)-\nu p_{\nu,k-1}(0,s)\right)\,\mathrm{d}s\\
&=-\lambda^\nu \int_0^t  \left(-\frac{\nu(-\lambda^\nu s)^{k-1}}{(k-2)!}-\frac{\nu(-\lambda^{\nu}s)^{k-1}}{(k-1)!}\right)\,\mathrm{d}s=\frac{-\nu(-\lambda^\nu t)^k}{(k-1)!},\ \ k\geq 1.
\end{align*}
Therefore
\begin{equation}
p_{\nu}(1,t)=-\sum_{k=1}^{\infty}\frac{\nu(-\lambda^\nu t)^k}{(k-1)!}=-\sum_{k=0}^{\infty}\frac{k\nu(-\lambda^\nu t)^k}{k!}.
\end{equation}

Now assume for $m>1$ the following:
\begin{equation*}
p_{\nu,k}(m,t)=\frac{(-1)^m}{m!}\frac{(k\nu)_m(-\lambda^\nu t)^k}{k!},\ \ k\geq 0,
\end{equation*}
{\it i.e.} (\ref{2.4kk}) holds for $n=m$, where $p_{\nu}(m,t)=\sum_{k=0}^{\infty}p_{\nu,k}(m,t)$.

For $n=m+1$, substituting $p_\nu(m+1,t)=\sum_{k=0}^{\infty}p_{\nu,k}(m+1,t)$ in (\ref{rgft}) and applying ADM, we get
\begin{equation*}\label{2.7kk}
\sum_{k=0}^{\infty}p_{\nu,k}(m+1,t)=p_\nu(m+1,0)-\lambda^\nu\sum_{k=0}^{\infty} \int_0^t \sum_{r=0}^{m+1} (-1)^r\frac{(\nu)_r}{r!}p_{\nu,k}(m+1-r,s)\,\mathrm{d}s.
\end{equation*}
Thus, $p_{\nu,0}(m+1,t)=p_\nu(m+1,0)=0$ and
\begin{equation*}
p_{\nu,k}(m+1,t)=-\lambda^\nu\int_0^t \sum_{r=0}^{m+1} (-1)^r\frac{(\nu)_r}{r!}p_{\nu,k-1}(m+1-r,s)\,\mathrm{d}s,\ \ k\geq 1.
\end{equation*}
Hence,
\begin{align*}
p_{\nu,1}(m+1,t)&=-\lambda^\nu\int_0^t \sum_{r=0}^{m+1} (-1)^r\frac{(\nu)_r}{r!}p_{\nu,0}(m+1-r,s)\,\mathrm{d}s\\
&=-\lambda^\nu\frac{(-1)^{m+1}}{(m+1)!}(\nu)_{m+1}\int_0^t\,\mathrm{d}s=\frac{(-1)^{m+1}}{(m+1)!}(\nu)_{m+1}(-\lambda^\nu t),\\
p_{\nu,2}(m+1,t)&=-\lambda^\nu\int_0^t \sum_{r=0}^{m+1} (-1)^r\frac{(\nu)_r}{r!}p_{\nu,1}(m+1-r,s)\,\mathrm{d}s\\
&=\frac{\lambda^{2\nu}(-1)^{m+1}}{(m+1)!}\int_0^ts\,\mathrm{d}s\sum_{r=0}^{m+1} \frac{(m+1)!}{r!(m+1-r)!}(\nu)_r(\nu)_{m+1-r}\\
&=\frac{(-1)^{m+1}}{(m+1)!}\frac{(2\nu)_{m+1}(-\lambda^\nu t)^2}{2!},
\end{align*}
where the last step follows from the binomial theorem for falling factorials. Now let
\begin{equation*}
p_{\nu,k-1}(m+1,t)=\frac{(-1)^{m+1}}{(m+1)!}\frac{((k-1)\nu)_{m+1}(-\lambda^\nu t)^{k-1}}{(k-1)!}.
\end{equation*}
Then
\begin{align*}
p_{\nu,k}(m+1,t)&=-\lambda^\nu\int_0^t \sum_{r=0}^{m+1} (-1)^r\frac{(\nu)_r}{r!}p_{\nu,k-1}(m+1-r,s)\,\mathrm{d}s\\
&=\frac{(-\lambda^\nu)^k(-1)^{m+1}}{(m+1)!(k-1)!}\int_0^ts^{k-1}\,\mathrm{d}s\sum_{r=0}^{m+1} \frac{(m+1)!}{r!(m+1-r)!}(\nu)_r((k-1)\nu)_{m+1-r}\\
&=\frac{(-1)^{m+1}}{(m+1)!}\frac{(k\nu)_{m+1}(-\lambda^\nu t)^k}{k!},\ \ k\geq 0.
\end{align*}
Therefore
\begin{equation*}
p^\alpha(m+1,t)=\frac{(-1)^{m+1}}{(m+1)!}\sum_{k=0}^{\infty‎}\frac{(-\lambda^\nu t)^k}{k!}\frac{\Gamma(k\nu+1)}{\Gamma(k\nu-m)},
\end{equation*}
and thus the result holds for $n=m+1$. This completes the proof.
\end{proof}

\begin{thebibliography}{1}



\bibitem{Adomian1986}
{\small {\sc G. Adomian:} {\it Nonlinear Stochastic Operator Equations.} Academic Press, Orlando, 1986.}
 
\bibitem{Adomian1994}
{\small {\sc G. Adomian:} {\it Solving Frontier Problems of Physics$:$ The Decomposition Method.} Kluwer Academic, Dordrecht, 1994.}

\bibitem{Beghin}
{\small {\sc L. Beghin, E. Orsingher:} {\it Fractional Poisson processes and related planar random motions.} Electron. J. Probab., {\bf 14} (2009), 1790--1826.}

\bibitem{Duan20101235}
{\small {\sc J.-S. Duan:} {\it Recurrence triangle for Adomian polynomials.} Appl. Math. Comput., {\bf 216} (2010), 1235--1241.}
 
\bibitem{Duan20116337}
{\small {\sc J.-S. Duan:} {\it Convenient analytic recurrence algorithms for the Adomian polynomials.} Appl. Math. Comput., {\bf 217} (2011), 6337--6348.}
 
\bibitem{Kataria2016}
{\small {\sc K. K. Kataria, P. Vellaisamy:} {\it Simple parametrization methods for generating Adomian polynomials.}
 Appl. Anal. Discrete Math., {\bf 10} (2016), 168--185.}
 
\bibitem{Kilbas2006}
{\small {\sc A. A. Kilbas, H. M. Srivastava, J. J. Trujillo:} {\it Theory and Applications of Fractional Differential Equations.} Elsevier, North Holland, 2006.}

\bibitem{Laskin}
{\small {\sc N. Laskin:} {\it Fractional Poisson Process.} Commun. Nonlinear Sci. Numer. Simul., {\bf 8} (2003), 201--213.}

\bibitem{Mark}
{\small {\sc M. M. Meerschaert, E. Nane, P. Vellaisamy:} {\it The fractional Poisson process and the inverse stable subordinator.} Electron. J. Probab., {\bf 16} (2011), 1600--1620.}

\bibitem{Orsingher}
{\small {\sc E. Orsingher, F. Polito:} {\it The space-fractional Poisson process.} Statist. Probab. Lett., {\bf 82} (2012), 852--858.}

\bibitem{Polito}
{\small {\sc F. Polito, E. Scalas:} {\it A generalization of the space-fractional Poisson process and its connection to some L\'evy processes.} Electron. Commun. Probab., {\bf 21} (2016), 1--14.}

\bibitem{Rach1984415}
{\small {\sc R. Rach:} {\it A convenient computational form for the Adomian polynomials.} J. Math. Anal. Appl., {\bf 102} (1984), 415--419.}

\bibitem{Rao15}
{\small {\sc A. Rao, M. Garg, S. L. Kalla:} {\it Caputo-type fractional derivative of a hypergeometric integral operator.} Kuwait J. Sci. Engrg., {\bf 37} (2010), 15--29.}

\bibitem{Saigo1978135}
{\small {\sc M. Saigo:} {\it A remark on integral operators involving the Gauss hypergeometric functions.} Math. Rep. Kyushu Univ., {\bf 11} (1978), 135--143.}

\bibitem{Saigo1998}
{\small {\sc M. Saigo, N. Maeda:} {\it More generalization of fractional calculus}, {Transform Methods \& Special Functions.} pp. 386--400, Varna 96 Proc. 2nd Intern. Workshop, Bulgaria Acad. Sci., Sofia, 1998.}

\bibitem{Srivastava1988}
{\small {\sc H. M. Srivastava, M. Saigo, S. Owa:} {\it A class of distortion theorems involving certain operators of fractional calculus.}  J. Math. Anal. Appl., {\bf 131} (1988), 412--420.}
\end{thebibliography}
\end{document}